\definecolor{webblue}{rgb}{0, 0, 1.0}  
\definecolor{webred}{rgb}{1.0, 0, 0}   
\newcommand{\PG}{\operatorname{PG}}
\newtheorem{theorem}{Theorem}[section]
\newtheorem{lemma}[theorem]{Lemma}
\theoremstyle{definition}
\newtheorem{remark}[theorem]{Remark}
\numberwithin{equation}{section}
\title[A New Reducibility Result]{A New Reducibility Result for Minihypers in Finite Projective Geometries}
\author[I. Landjev]{Ivan Landjev}
\address[I. Landjev]{Institute of Mathematics and Informatics, BAS, 
	8,~Acad. G. Bonchev. str., 1113 Sofia, Bulgaria}
\email{\tt ivan@math.bas.bg}
\author[A. Rousseva]{Assia Rousseva}
\address[A. Rousseva]{Sofia University, Faculty of Mathematics and Informatics, 5,~James Bourchier Blvd, 1164 Sofia, Bulgaria}
\email{{\tt assia@fmi.uni-sofia.bg}}
\author[K. Vorob'ev]{Konstantin Vorobev}
\address[K. Vorob'ev]{Institute of Mathematics and Informatics, BAS,
	8,~Acad. G. Bonchev. str., 1113 Sofia, Bulgaria}
\email{{\tt konstantin.vorobev@gmail.com}}
\begin{document}
	
	\begin{abstract}
		In this paper we prove a new reducibility result for minihypers in projective geometries over finite fields. It is further used  to characterize
		the minihypers with parameters $(70,22)$ in $\PG(4,3)$. The latter  can be used to attack the existence problem for some hypothetical ternary Griesmer codes of dimension 6.\\
		
		\textbf{Keywords: linear codes, minihypers, reducibility, Griesmer bound}   
	\end{abstract}
	
	\maketitle

\section{Introduction}

In this note we present a reducibility theorem for minihypers in the projective geometries $\PG(r,q)$. It can be used to characterize the minihypers with parameters $(70,22)$ in $\PG(4,3)$. These are instrumental for solving the problem of the existence/nonexistence of several ternary Griesmer codes of dimension 6 \cite{Maruta-table,MO11,SM22,TOM08,YM09}.
We do not impose a restriction on the maximal point multiplicity although
for the 6-dimensional codes we need minihypers with a maximal point multiplicity of 2.
The paper is structured as follows. In section 2 we give some definitions and basic facts on arcs and minihypers in finite projective geometries. Section 3 contains our general reducibility result for minihypers. This theorem is then used in Section 4 to give a characterization of the $(70,22)$-minihypers in $\PG(4,3)$.

\section{Preliminaries}

In this section we introduce some basic notions and results on multisets of points in $\PG(r,q)$.

A \emph{multiset} in $\PG(r,q)$ is a mapping
$\mathcal{K}\colon\mathcal{P}\to\mathbb{N}_0$, from the pointset $\mathcal{P}$ 
of $\PG(r,q)$ to the non-negative integers.
For a subset $\mathcal{Q}$ of $\mathcal{P}$, we define 
$\mathcal{K}(\mathcal{Q})=\sum_{P\in\mathcal{Q}} \mathcal{K}(P)$.
The integer $\mathcal{K}(\mathcal{Q})$ is called the multiplicity of the 
subset $\mathcal{Q}$.
A point of multiplicity $i$ is called an $i$-point. Similarly, $i$-lines, $i$-planes,
$i$-solids are points, lines, 3-dimensional subspaces of multiplicity $i$. 
The integer $\mathcal{K}(\mathcal{P})$
is called the cardinality of the multiset $\mathcal{K}$.
Given a set of points $\mathcal{Q}\subseteq\mathcal{P}$ we define
\[\chi_{\mathcal{Q}}(P)=\left\{
	\begin{array}{ll}
1 & \text{ if } P\in\mathcal{Q}, \\
0 &	\text{ otherwise.}
\end{array}\right.\]
A multiset is called \emph{projective} if the multiplicity of each point takes on 
a value in $\{0,1\}$. A minihyper with
point multiplicities 0 and 1 is called a \emph{blocking set}.  
Minihypers were introduced by Hamada. Using this notion we indicate
the presence of multiple points.

A multiset $\mathcal{K}$ in $\PG(r,q)$ is called an
\emph{$(n,w)$-arc}, if: $(a)$ $\mathcal{K}(\mathcal{P}) = n$;
$(b)$ $\mathcal{K}(H)\le w$ for each hyperplane $H$ in $\PG(r,q)$, and
$(c)$ there is a hyperplane $H_0$ with $\mathcal{K}(H_0)=w$.
In a similar way, we define an \emph{$(n,w)$-minihyper} (or 
\emph{$(n,w)$-blocking set}) 
as a multiset $\mathcal{K}$ in $\PG(r,q)$ satisfying:
$(d)$ $\mathcal{K}(\mathcal{P}) = n$;
$(e)$ $\mathcal{K}(H)\ge w$ for each hyperplane $H$ in $\PG(k-1,q)$, and
$(f)$ there is a hyperplane $H_0$ with $\mathcal{K}(H_0)=w$.

For a multiset $\mathcal{K}$ in $\PG(k-1,q)$, 
we denote by $a_i$ the number of hyperplanes $H$ 
with $\mathcal{K}(H)=i$,
$i\ge0$. By $\Lambda_j$ we denote the number of points $P$ from
$\mathcal{P}$ with $\mathcal{K}(P)=j$.
The sequence $a_0,a_1,a_2,\ldots$ is called
\emph{the spectrum} of $\mathcal{K}$. Sometimes when we want to stress the fact
that a certain spectrum relates to the multiset $\mathcal{K}$, we
write $a_i(\mathcal{K})$, resp $\Lambda_j(\mathcal{K})$.

The existence of an $[n,k,d]_q$-code $C$ of full length
(no coordinate identically zero) is equivalent to that of a
$(n,n-d)$-arc in $\PG(k-1,q)$. From any generator matrix $G$
of $C$  one can define a multiset $\mathcal{K}$ with points
(with the corresponding multiplicities) the columns of $G$.
This correspondence between
$[n,k,d]_q$ codes and $(n,n-d)$-arcs maps isomorphic codes 
to projectively equivalent arcs and vice versa. If $\mathcal{K}$ is an
$(n,w)$-arc in $\PG(k-1,q)$ with maximal point multiplicity $s$, then 
$\mathcal{F}=s-\mathcal{K}$ is an $(sv_k-n,sv_{k-1}-w)$-minihyper.
Here as usual $v_k=(q^k-1)/(q-1)$.

Given an $(n,w)$-arc $\mathcal{K}$ in $\PG(k-1,q)$, 
we denote by $\gamma_i(\mathcal{K})$ the maximal
multiplicity of an $i$-dimensional flat in $\PG(k-1,q)$, i.e.
$\gamma_i(\mathcal{K}) = \max_{\delta}\mathcal{K}(\delta),\, 
i=0,\ldots,k-1$.
If $\mathcal{K}$ is
clear from the context we shall write just $\gamma_i$. 
It is well known that if $\mathcal{K}$ is an 
$(n,n-d)$-arc in $\PG(k-1,q)$ with $n=t+g_q(k,d)$ then
\[\gamma_j(\mathcal{K})\le t+\sum_{i=k-1-j}^{k-1} \left\lceil\frac{d}{q^{i}}\right\rceil.\]
In particular, for Griesmer arcs the maximal point multiplicity is at
most $\lceil d/q^{k-1}\rceil$.

In terms of minihypers we have the following lower bounds on the multiplicity of subspaces
of different dimensions. Both results are obtained by simple counting arguments.

\begin{lemma}
Let $\mathcal{F}$ be an $(n,w)$-minihyper in $\PG(r,q)$. Then
for any $s$-dimensional subspace $S$, it holds

\begin{equation}
	\label{eq:bound}
\mathcal{F}(S)\ge\left\lceil \frac{v_{r-s}w-v_{r-s-1}n}{q^{r-s-1}} \right\rceil.
\end{equation}
\end{lemma}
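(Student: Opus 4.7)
The plan is a standard double-count of incidences between the points of $\mathcal F$ and the hyperplanes through the fixed $s$-flat $S$. Everything follows once the two elementary enumerations below are in place; no clever construction is required, and the only delicate point is keeping the indices straight.

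First I would fix $S$ and let $\mathcal H_S$ denote the set of hyperplanes of $\PG(r,q)$ containing $S$. By passing to the quotient geometry $\PG(r,q)/S \cong \PG(r-s-1,q)$, the hyperplanes containing $S$ correspond to the hyperplanes of this quotient, so $|\mathcal H_S|=v_{r-s}$. Next I would count, for a point $P\in\mathcal P$, the number of hyperplanes in $\mathcal H_S$ that contain $P$: if $P\in S$ this is obviously $|\mathcal H_S|=v_{r-s}$; if $P\notin S$, the hyperplanes of $\mathcal H_S$ through $P$ are exactly the hyperplanes through the $(s+1)$-flat $\langle S,P\rangle$, and by the same quotient argument there are $v_{r-s-1}$ of them.

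With these two counts, summing the multiplicities over $\mathcal H_S$ in two ways gives
\begin{equation*}
\sum_{H\in\mathcal H_S}\mathcal F(H)\;=\;v_{r-s}\,\mathcal F(S)+v_{r-s-1}\bigl(n-\mathcal F(S)\bigr)\;=\;q^{r-s-1}\mathcal F(S)+v_{r-s-1}\,n,
\end{equation*}
using the identity $v_{r-s}-v_{r-s-1}=q^{r-s-1}$. On the other hand each hyperplane has multiplicity at least $w$ by the minihyper property, so the left-hand side is at least $v_{r-s}w$. Rearranging yields
\begin{equation*}
\mathcal F(S)\;\ge\;\frac{v_{r-s}w-v_{r-s-1}n}{q^{r-s-1}},
\end{equation*}
and since $\mathcal F(S)$ is a non-negative integer one may take the ceiling of the right-hand side, which is the stated bound.

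The main obstacle, such as it is, is bookkeeping: one has to verify that $|\mathcal H_S|=v_{r-s}$ and that a point outside $S$ lies on $v_{r-s-1}$ of these hyperplanes, and then use the right telescoping identity $v_{r-s}-v_{r-s-1}=q^{r-s-1}$. There is nothing to prove beyond that, and in particular the bound remains valid (and trivial) when its right-hand side is non-positive.
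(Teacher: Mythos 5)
Your proof is correct, and it is precisely the ``simple counting argument'' the paper alludes to without writing out: the paper gives no explicit proof of this lemma, and your double count of hyperplanes through $S$, together with the identity $v_{r-s}-v_{r-s-1}=q^{r-s-1}$ and the integrality of $\mathcal F(S)$, is the standard and intended derivation.
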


\begin{lemma}
	Let $\mathcal{F}$ be an $(n,w)$-minihyper in $\PG(r,q)$. Let $H$ be a hyperplane
	and let $T$ be a subspace of codimension 2 contained in $H$. Then
	
	\begin{equation}
	\label{eq:bound1}
	\mathcal{F}(T)\ge w-\frac{n-\mathcal{F}(H)}{q}.
	\end{equation}
\end{lemma}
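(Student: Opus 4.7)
The plan is to exploit the pencil of hyperplanes through the codimension-2 subspace $T$ and use the lower bound $w$ on each of them. Since $T$ has codimension $2$ in $\PG(r,q)$, there are exactly $q+1$ hyperplanes containing $T$; call them $H = H_0, H_1, \ldots, H_q$.

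The key identity is a double-counting relation for the total multiplicity covered by this pencil. Every point of $\PG(r,q)$ either lies in $T$ (and is then counted $q+1$ times, once in each $H_i$) or lies outside $T$ (and is then counted exactly once, in the unique hyperplane of the pencil containing it). This yields
\[
\sum_{i=0}^{q}\mathcal{F}(H_i) \;=\; n \;+\; q\,\mathcal{F}(T).
\]

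Now I would isolate $\mathcal{F}(H)=\mathcal{F}(H_0)$ on the left and apply the defining inequality $\mathcal{F}(H_i)\ge w$ to the remaining $q$ hyperplanes $H_1,\ldots,H_q$ of the pencil. This gives $\mathcal{F}(H)+qw \le n + q\,\mathcal{F}(T)$, and rearranging produces exactly
\[
\mathcal{F}(T) \;\ge\; w - \frac{n-\mathcal{F}(H)}{q},
\]
which is \eqref{eq:bound1}.

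There is no real obstacle here: the whole argument is a one-line pencil counting together with the minihyper axiom $(e)$. The only small care needed is to note that the sum $\sum_i \mathcal{F}(H_i)$ really does equal $n + q\mathcal{F}(T)$ (and not $n + (q+1)\mathcal{F}(T)$ or $n - \mathcal{F}(T) + (q+1)\mathcal{F}(T)$, which of course agree). Since the resulting bound need not be an integer, no ceiling appears in the statement, unlike in \eqref{eq:bound}.
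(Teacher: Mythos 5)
Your proof is correct and is exactly the ``simple counting argument'' the paper alludes to without writing out: the pencil identity $\sum_{i=0}^{q}\mathcal{F}(H_i)=n+q\,\mathcal{F}(T)$ combined with $\mathcal{F}(H_i)\ge w$ for the $q$ hyperplanes other than $H$. Nothing further is needed.
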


The following argument will be used repeatedly throughout the paper.
Let $\mathcal{K}$ be a multiset in $\PG(r,q)$.
Fix an $i$-dimensional flat $\delta$ in
$\PG(r,q)$, with $\mathcal{K}({\delta})=t$.
Let further $\pi$ be a $j$-dimensional flat in $\PG(r,q)$ of complementary dimension,
i.e. $i+j=r-1$ and $\delta\cap\pi=\varnothing$.
Define the projection $\varphi=\varphi_{\delta,\pi}$ from $\delta$ onto $\pi$ by
\begin{equation}
\label{eq:project}
\varphi\colon
\left\{\begin{array}{lll}
\mathcal{P}\setminus\delta\ & \rightarrow\ & \pi \\
Q & \rightarrow & \pi\cap\langle\delta,Q\rangle .
\end{array}\right.
\end{equation}
In other words, every point $Q$ of $\PG(r,q)$, which is not in $\delta$,
is mapped in the point which is the
intersection of $\pi$ and the subspace generated by $\delta$ and $Q$.
As before, $\mathcal{P}$ denotes the set of points of $\PG(k-1,q)$.
Note that $\varphi$ maps $(i+s)$-flats containing
$\delta$ into $(s-1)$-flats in $\pi$.
Given a set of points $F\subset\pi$, define the induced multiset
$\mathcal{K}^{\varphi}$ by
\[\mathcal{K}^{\varphi}({F})\,=
\,\sum_{\varphi_{\delta,\pi}(P)\in\mathcal{F}} \mathcal{K}(P).\]
We shall exploit the obvious fact that if $S$ is a flat in $\PG(k-1,q)$
through $\delta$
then $\mathcal{K}^{\varphi}(\varphi(S))=\mathcal{K}(S)-t$.
In the next sections the subspace $\pi$ will be a plane.
A line in $\pi$ which is incident with the points $P_0,\ldots,P_q$
is called a line of type
$(\mathcal{K}^{\varphi}(P_0),\ldots,\mathcal{K}^{\varphi}(P_q))$.

The next few results have been  proved for linear codes, but can be easily
reformulated for arcs and blocking sets in finite projective geometries.
This is done in the next theorems.

\begin{theorem}
\label{thm:ward}\cite{HNW98}\
Let $\mathcal{K}$ be an $(n,w)$-arc (resp. $(n,w)$-minihyper) in $\PG(r,p)$,
where $p$ is a prime. Let further $w\equiv n\pmod{p^e}$ for some $e\ge1$.
Then for every hyperplane $H$ it holds that $\mathcal{K}(H)\equiv n\pmod{p^e}$.
\end{theorem}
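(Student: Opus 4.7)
The plan is to translate the statement into coding-theoretic language and then invoke the Hill--Newton--Ward divisibility theorem from \cite{HNW98}, which is really the coding-theoretic form of what is being asserted here.

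For the arc case, I would associate to $\mathcal{K}$ the standard $[n,r+1,n-w]_p$-code $C$ whose generator matrix $G$ has, for each point $P$, a column equal to a representative vector of $P$ repeated $\mathcal{K}(P)$ times. As recalled in the preliminaries, a hyperplane $H$ of $\PG(r,p)$ is the kernel of some nonzero linear form $f$ on $\mathbb{F}_p^{r+1}$, and the codeword $fG$ has weight $n-\mathcal{K}(H)$. Hence the minimum distance of $C$ equals $d=n-w$, and the hypothesis $w\equiv n\pmod{p^e}$ is exactly the condition $p^e\mid d$. The cited theorem then forces $p^e\mid\operatorname{wt}(c)$ for every $c\in C$, which pulled back through the correspondence reads $\mathcal{K}(H)\equiv n\pmod{p^e}$ for every hyperplane $H$.

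For the minihyper case, I would pick an integer $s$ that is both a multiple of $p^e$ and at least the maximum point multiplicity of $\mathcal{F}$, and pass to the complementary arc $\mathcal{K}:=s-\mathcal{F}$. Using $v_{r+1}-v_r=p^r$, a direct count shows that $\mathcal{K}$ is an $(n',w')$-arc with $n'=sv_{r+1}-n$, $w'=sv_r-w$, and
\[
n'-w' \;=\; sp^r-(n-w) \;\equiv\; 0 \pmod{p^e}.
\]
Applying the arc case to $\mathcal{K}$ and translating back via $\mathcal{F}(H)=sv_r-\mathcal{K}(H)$, together with $sv_r\equiv 0\pmod{p^e}$, yields $\mathcal{F}(H)\equiv n\pmod{p^e}$.

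The real content of the result is the Hill--Newton--Ward theorem itself, namely that in a linear code over $\mathbb{F}_p$ divisibility of the minimum weight by $p^e$ forces the same divisibility on every codeword weight; the translation above treats it as a black box. The only thing that needs a moment of attention is the bookkeeping in the minihyper step, specifically choosing $s$ so as to simultaneously guarantee $\mathcal{K}\geq 0$ and preserve the congruence modulo $p^e$, which is arranged by taking $s$ to be any sufficiently large multiple of $p^e$.
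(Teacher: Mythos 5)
Your overall strategy---pass to the associated code via a generator matrix whose columns are the points taken with multiplicity, identify hyperplane multiplicities with $n$ minus codeword weights, and handle the minihyper case by complementing in $s\cdot\chi_{\mathcal{P}}$ for a sufficiently large multiple $s$ of $p^e$---is exactly the dictionary the paper has in mind; the paper offers no proof of this theorem beyond the citation, and your bookkeeping for the complementation step is correct (note also that adding $t\cdot\chi_{\mathcal{P}}$ to an arc preserves the Griesmer property, which matters below).

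The gap is in the black box itself. The result of \cite{HNW98} (a single-author paper by H.~N.~Ward; ``HNW'' are his initials, not Hill--Newton--Ward) is a divisibility theorem for codes \emph{meeting the Griesmer bound}: if a $p$-ary Griesmer code has minimum weight divisible by $p^e$, then all its weights are divisible by $p^e$. The statement you attribute to it---that in an \emph{arbitrary} linear code over $\mathbb{F}_p$ divisibility of the minimum weight by $p^e$ propagates to every codeword weight---is false. For example, the binary code with generator matrix columns $e_1,e_1,e_2,e_2,e_3,e_1+e_2+e_3$ is a $[6,3,2]_2$ code with codewords of weight $3$; equivalently, the corresponding $(6,4)$-arc in $\PG(2,2)$ satisfies $w\equiv n\pmod 2$ yet has lines of multiplicity $3\not\equiv 6\pmod 2$. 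So as written your argument derives the conclusion from a false lemma. The repair is to add the hypothesis that the relevant code ($\mathcal{K}$ itself in the arc case, the complementary arc $s-\mathcal{F}$ in the minihyper case) meets the Griesmer bound; this is an extra assumption that must be stated and, in applications, verified. The paper's own formulation of Theorem~\ref{thm:ward} suppresses this hypothesis too, but in its only use (Lemma~\ref{lma:divisible}) the authors explicitly check that the associated $[121s-70,5,81s-48]_3$ code is a Griesmer code before invoking divisibility, so the omission is in the statement rather than in the way the result is applied.
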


An $(n,w)$-arc in $\PG(r,q)$ is called \emph{$t$-extendable} if the multiplicities of
some of the points can be increased by a total of $t$, so that
the obtained arc has parameters $(n+t,w)$. Similarly, an $(n,w)$-minihyper is
called \emph{$t$-reducible}  if the multiplicities of some of the points can be reduced by a total of $t$, so that the obtained multiset is an $(n-t,w)$-minihyper.
The following result by R. Hill and P. Lizak was proved initially for linear codes.

\begin{theorem}
\label{thm:hill}\cite{H99,HL95}\
Let $\mathcal{K}$ be an $(n,w)$-arc (resp. $(n,w)$-minihyper)
associated with a Griesmer code
in $\PG(r,q)$ with $(n-w,q)=1$, such that the multiplicities of all hyperplanes
are $n$ or $w$ modulo $q$. Then $\mathcal{K}$ is extendable to
an $(n+1,w)$-arc (resp. reducible to an $(n-1,w)$-minihyper).	
Moreover the point of extension (resp. the point of reduction) is uniquely determined.	
\end{theorem}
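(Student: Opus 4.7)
The plan is to deduce the theorem from the original Hill--Lizak extendability result \cite{H99,HL95} for linear codes via the standard arc--code dictionary, handling the minihyper case by complementation.

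For the arc version I would pass to the $[n,r+1,d]_q$-code $C$ associated with $\mathcal{K}$ (where $d=n-w$): a generator matrix $G$ of $C$ has as its columns representative vectors of the points of $\mathcal{K}$, each listed with the corresponding multiplicity. The nonzero codewords $uG$, $u\in\mathbb{F}_q^{r+1}$, are in $(q-1)$-to-$1$ correspondence with the hyperplanes of $\PG(r,q)$ via $u\mapsto\ker u$, and satisfy $\mathrm{wt}(uG)=n-\mathcal{K}(\ker u)$. Under this bijection the hypotheses of the theorem become the hypotheses of Hill--Lizak: $C$ is a Griesmer code, $\gcd(d,q)=1$, and every nonzero weight of $C$ is $\equiv 0$ or $d\pmod q$. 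That theorem then yields a vector $g\in\mathbb{F}_q^{r+1}$, unique up to scalar multiple, such that adjoining $g$ to $G$ produces an $[n+1,r+1,d]_q$-code; the projective class of $g$ is the unique extension point.

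For the minihyper version I would pass to the complementary arc $\mathcal{K}=s-\mathcal{F}$, where $s$ is the maximum point multiplicity of $\mathcal{F}$. By the remark in Section~2, $\mathcal{K}$ is an $(sv_{r+1}-n,\,sv_r-w)$-arc. A direct check using $v_{r+1}\equiv v_r\equiv 1\pmod q$ shows that the numerical hypotheses transfer: in particular $sv_r-w-(sv_{r+1}-n)=n-w-sq^r$ is coprime to $q$ iff $n-w$ is, and a hyperplane $H$ has $\mathcal{F}(H)\equiv n$ or $w\pmod q$ iff $\mathcal{K}(H)\equiv sv_{r+1}-n$ or $sv_r-w\pmod q$, i.e.\ iff $\mathcal{K}(H)$ is congruent to the $n$- or $w$-value of the arc. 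The ``associated with a Griesmer code'' hypothesis is preserved by definition. Finally, reducing the multiplicity of $\mathcal{F}$ at a point $P_0$ by one is the same as increasing the multiplicity of $\mathcal{K}$ at $P_0$ by one, so the existence and uniqueness of the reduction point follow from the arc case.

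The real substance sits inside the Hill--Lizak argument for codes, which combines a counting analysis of the weight enumerator modulo $q$ with the sharpness of the Griesmer bound; the translation above is essentially bookkeeping. If one wanted a direct geometric proof, the main obstacle would be to show that the intersection of all hyperplanes of maximum multiplicity $w$ consists of exactly one point, which is precisely what the code-theoretic argument establishes indirectly.
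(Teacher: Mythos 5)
Your proposal is correct and matches the paper's treatment of this statement: the paper offers no proof at all, merely citing Hill and Hill--Lizak and remarking that the code-theoretic results ``can be easily reformulated for arcs and blocking sets,'' which is precisely the arc--code dictionary ($\mathrm{wt}(uG)=n-\mathcal{K}(\ker u)$) and the complementation $\mathcal{F}=s-\mathcal{K}$ that you carry out. The one step your translation inherits from the cited theorem rather than verifies is that in the minihyper case the reduction point $P_0$ must satisfy $\mathcal{F}(P_0)\ge 1$ (equivalently, the extension point of the complementary arc is not already of maximal multiplicity $s$), since otherwise ``reducing'' at $P_0$ is not legal; this is a standard but nontrivial part of the reducibility statement.
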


The next theorem is a more sophisticated extension result
by Hitoshi Kanda \cite{HK20} which applies only
to arcs (minihyper) in a geometry over $\mathbb{F}_3$.

\begin{theorem}
\label{thm:kanda}\cite{HK20}
Let $\mathcal{K}$ be an $(n,w)$-arc (resp. $(n,w)$-blocking set) in $\PG(r,3)$.
Assume further that the multiplicity of every hyperplane $H$ is
congruent to $n, n+1$, or $n+2$ modulo 9
(resp. $n-2, n-1$, or $n$ modulo 9 ). 
Then $\mathcal{K}$ is extendable to an $(n+2,w)$-arc
(resp. reducible to an $(n-2,w)$-minihyper).
\end{theorem}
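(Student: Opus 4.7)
The plan is to adapt the proof of the Hill--Lizak extension theorem (Theorem~\ref{thm:hill}) to the finer modular hypothesis. Hill--Lizak uses the condition $\mathcal{K}(H)\in\{n,w\}\pmod{q}$ to pin down a \emph{unique} extension point; here the hypothesis restricts the hyperplane multiplicities to three consecutive residues modulo~$9$, which should provide just enough slack to produce two extension points (or one point of multiplicity~$2$). I would treat the arc case and deduce the minihyper case by passing to the complement $\mathcal{F}=s-\mathcal{K}$, noting that the residue condition $n,n+1,n+2\pmod 9$ on hyperplane multiplicities of $\mathcal{K}$ transforms into the stated $n-2,n-1,n\pmod 9$ condition on $\mathcal{F}$.

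First I would partition the hyperplanes of $\PG(r,3)$ into three classes $A_0,A_1,A_2$ according to $\mathcal{K}(H)\bmod 9$. The two standard incidence identities $\sum_H\mathcal{K}(H)=v_r\,n$ and $\sum_H\mathcal{K}(H)^2=q^{r-1}\sum_P\mathcal{K}(P)^2+v_{r-1}n^2$ then yield linear congruences modulo~$9$ on $|A_0|,|A_1|,|A_2|$. Localised versions of the same identities, obtained by summing only over hyperplanes through a fixed point $P$ or line $\ell$, give analogous congruences for the stars of $P$ and $\ell$; these are the ingredients for locating a candidate extension point. Next I would use the projection device $\varphi_{P,\pi}$ from Section~2: projecting from a candidate $P$ onto a complementary hyperplane $\pi$ produces an induced arc $\mathcal{K}^{\varphi}$ in $\PG(r-1,3)$ whose hyperplane spectrum inherits a related modular condition (after accounting for the multiplicity $\mathcal{K}(P)$). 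This sets up an induction on $r$, with the planar base case handled by a direct case analysis on line-types in $\PG(2,3)$.

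The main obstacle, and the reason the result is specific to $q=3$, is to show that the mod-$9$ slack always produces a pair of points (possibly coincident) with the property that no tight ($w$-)hyperplane contains either of them, i.e., that no configuration of $w$-hyperplanes can simultaneously block every candidate pair. This is where the small field size is essential: a line of $\PG(r,3)$ has only $4$ points, so a tight hyperplane either contains such a line or meets it in exactly one point, severely constraining the possible incidence patterns of $w$-hyperplanes with a candidate pair. Carrying this constraint through the induction, and handling the degenerate case where the two extension points must coincide (giving a single point extended by multiplicity~$2$), is the technical heart of the argument; unlike in Theorem~\ref{thm:hill}, the extension ``point'' is no longer unique, so one must argue existence rather than single out a canonical choice.
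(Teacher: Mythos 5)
First, note that the paper itself offers no proof of this statement: Theorem~\ref{thm:kanda} is quoted from Kanda~\cite{HK20} and used as a black box, so there is no in-paper argument to measure your attempt against.

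Judged on its own, your proposal is a strategy outline rather than a proof, and the gap is exactly at the decisive step. Everything you set up --- the partition of hyperplanes into residue classes $A_0,A_1,A_2$ modulo $9$, the two incidence identities and their localisations at a point or a line, the projection $\varphi_{\delta,\pi}$ and the induction on $r$ --- is standard machinery that is equally available for any $q$ and any modular hypothesis; none of it yet distinguishes the true statement from a false one. The entire content of the theorem is the claim you defer as ``the technical heart'': that the mod-$9$ condition forces the existence of a pair of points (possibly equal) whose addition keeps every hyperplane at multiplicity at most $w$. You assert that the small field size ``severely constrains the possible incidence patterns'' but give no argument, no case analysis for the planar base case, and no verification that the inductive step preserves the hypothesis; as written, the proof could be transcribed verbatim with ``$9$'' replaced by ``$q^2$'' for any prime $q$, where the analogous statement is not known to hold in this form. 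A secondary inaccuracy: for a $2$-extension it is not enough that no $w$-hyperplane contains either added point; you must also ensure that no $(w-1)$-hyperplane contains both, and this interacts with which of the residues $n$, $n+1$, $n+2$ actually occur --- a distinction your sketch never engages with. In short, the proposal identifies plausible tools but does not supply the argument that makes the theorem true.
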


\section{A Reducibility Theorem for Minihypers}

\begin{theorem}
\label{thm:main}
	Let $\mathcal{F}$ be an $(n,w)$-minihyper in $\PG(r,p)$, $p$ -- a prime, 
	with $w\equiv n-p\pmod{p^2}$ that has the following properties:
	
	\begin{enumerate}[(1)]
	\item for every hyperplane $H$ in $\PG(r,p)$ it holds
	$\mathcal{F}(H)\equiv n-p$ or $n\pmod{p^2}$;
	\item for every hyperplane $H$ with $\mathcal{F}(H)\equiv n-p\pmod{p^2}$, the restriction $\mathcal{F}\vert_H$ is reducible to a divisible minihyper with divisor $p$;
	\item for every hyperplane $H$ with $\mathcal{F}(H)\equiv n\pmod{p^2}$, the restriction $\mathcal{F}\vert_H$ is a divisible minihyper with divisor $p$.
	\end{enumerate}

	Then $\mathcal{F}=\mathcal{F}'+\chi_L$, where $\mathcal{F}'$ is a 
	$(n-v_2,w-v_1)$-minihyper and $L$ is a line. Moreover, the line $L$ is uniquely determined.
\end{theorem}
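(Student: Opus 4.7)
The plan is to pin down $L$ explicitly from the mod-$p^2$ structure imposed by conditions (1)--(3), and then verify the reduction and its uniqueness. Setting $\mathcal{F}' = \mathcal{F} - \chi_L$ and using $\chi_L(H) = 1$ or $p+1$ according to whether $L \not\subset H$ or $L \subset H$, the requirement that $\mathcal{F}'$ be an $(n - v_2, w - v_1)$-minihyper is equivalent to: $\mathcal{F}(P) \geq 1$ for every $P \in L$, and $\mathcal{F}(H) \geq w + p$ for every hyperplane $H$ containing $L$. Condition (1) together with $w \equiv n - p \pmod{p^2}$ and $\mathcal{F}(H) \geq w$ already forces every hyperplane in the residue class $n \pmod{p^2}$ to satisfy $\mathcal{F}(H) \geq w + p$ automatically (since the smallest admissible value in that class above $w$ is precisely $w+p$). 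So the task reduces to producing a line $L \subseteq \operatorname{supp}\mathcal{F}$ avoiding every \emph{tight} hyperplane (one with $\mathcal{F}(H) = w$, which necessarily lies in the residue class $n - p \pmod{p^2}$) and showing this line is unique.

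To construct $L$ I would combine conditions (2), (3) and a pencil-counting argument. For a codimension-$2$ flat $T$ the identity $\sum_{H \supset T} \mathcal{F}(H) = n + p\, \mathcal{F}(T)$, together with the decomposition $\mathcal{F}(H) = w + p\, \nu(H) + p^2 \mu(H)$ (where $\nu(H) \in \{0,1\}$ indicates the residue class $n \pmod{p^2}$), yields the mod-$p$ congruence $N_A(T) \equiv 1 - w + \mathcal{F}(T) \pmod p$, and a parallel calculation at the point level gives $N_A(P) \equiv 1 \pmod p$ for $r \geq 3$. Condition (3) then tells us that $\mathcal{F}(T) \equiv n \pmod p$ for every codim-$2$ $T$ contained in a residue-class-$n$ hyperplane, while condition (2) combined with Lemma 2.2 applied to a tight hyperplane $H$ (where $n - \mathcal{F}(H) = n - w$ is minimal) pins down the non-divisible part of $\mathcal{F}\vert_H$ to a very specific sub-multiset. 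One then wants to show that across all tight hyperplanes $H$ these sub-multisets coincide with the trace $\chi_L\vert_H$ of one and the same line $L$. A natural tool for this patching is the projection $\varphi_{L,\pi}$ from a candidate point of $L$ onto a complementary hyperplane, reducing the matter to a lower-dimensional instance and proceeding by induction on $r$.

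Given $L$, verification that $\mathcal{F} - \chi_L$ is an $(n - v_2, w - v_1)$-minihyper is immediate: the hyperplane bound holds by the construction of $L$, and the equality $\mathcal{F}'(H_0) = w - 1$ is attained at any tight hyperplane $H_0$ (which by construction cannot contain $L$). Uniqueness follows because the characterization in the middle step extracts $L$ canonically from the mod-$p^2$ data, so any second admissible line would have to induce the same prescribed sub-multiset on every tight hyperplane. The main obstacle will be that middle step — proving that the local reductions of $\mathcal{F}\vert_H$ promised by condition (2) on different tight hyperplanes patch together as the trace of one global line. This is the point at which conditions (2) and (3) must be used in tandem and where the full strength of the mod-$p^2$ hypothesis is consumed.
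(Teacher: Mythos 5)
Your opening reduction is correct and matches the paper's: writing $\mathcal{F}'=\mathcal{F}-\chi_L$, the claim is equivalent to finding a line $L$ all of whose points have $\mathcal{F}$-multiplicity at least $1$ and which lies in no tight hyperplane (equivalently, every hyperplane through $L$ has multiplicity in the class $n\bmod p^2$, hence at least $w+p$). Your pencil congruence is also the paper's first step: for a hyperline $T$ the identity $\sum_{H\supset T}\mathcal{F}(H)=n+p\,\mathcal{F}(T)$ forces the number of residue-class-$n$ hyperplanes through $T$ to be $\equiv 1\pmod p$ when $\mathcal{F}(T)\equiv n\pmod p$, while conditions (2) and (3) force every hyperplane through a hyperline of multiplicity $\equiv n-1\pmod p$ to lie in the class $n-p$.

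However, the step you yourself flag as ``the main obstacle'' is exactly the content of the theorem, and your proposal does not contain an argument for it --- only the statement that one ``wants to show'' the local reduction points patch into a global line, with a suggestion to project from ``a candidate point of $L$'' and induct on $r$. That induction is not set up (no base case, no inductive statement), and projecting from a point of $L$ presupposes the very point whose existence is at issue. The paper closes this gap by a dual-geometry argument that is absent from your sketch: the two facts above say precisely that in the dual space the indicator of the residue-class-$n$ hyperplanes is a set in which any line meeting it twice is wholly contained, i.e.\ a subspace; then, counting via a minimal hyperplane $H_0$ (each hyperline of $H_0$ through the reduction point of $\mathcal{F}\vert_{H_0}$ lies in exactly one residue-class-$n$ hyperplane), this subspace has exactly $v_{r-1}$ elements and is therefore the pencil of all hyperplanes through a line $L$. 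The support condition is then obtained separately: for the $p+1$ hyperplanes $H_i$ through a hyperline of $H_0$ of multiplicity $\equiv n-1\pmod p$, the unique reduction points $P_i$ of $\mathcal{F}\vert_{H_i}$ are collinear, span $L$, and each satisfies $\mathcal{F}(P_i)\ge 1$. Without this (or an equivalent) chain, your proposal establishes neither the existence of $L$ nor, consequently, its uniqueness, which you derive only from the unproven middle step.
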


\begin{proof}
By (1) the multiplicities of the hyperplanes are $w+ip^2$ and $w+ip^2+p$, where 
$i=0,1,2,\ldots$. The hyperplanes $H$ of multiplicity $w+ip^2$ are reducible to
divisible minihypers by (2). Hence $\mathcal{F}\vert_H$ has parameters
$(w+ip^2,u_i)$, where $u_i\equiv w+ip^2-1\pmod{p}$.
Moreover, all hyperlines in $H$ have multiplicity $\equiv w-1$ or $w\pmod{p}$, or equivalently, $n-1$ or $n\pmod{p}$. The point of reduction is contained only in hyperplanes of multiplicity $\equiv n\pmod{p}$. Every hyperline that does not contain the point
of reduction is of multiplicity $\equiv n-1\pmod{p}$.

In the hyperplanes of multiplicity $w+p+ip^2$ all hyperlines have multiplicity
$\equiv n\pmod{p}$. So, for all hyperplanes $H$ through a hyperline of multiplicity 
$\equiv n-1\pmod{p}$,
one has $\mathcal{F}(H)\equiv n-p\pmod{p^2}$. 

Consider a hyperline $T$ of
multiplicity $\mathcal{F}(T)\equiv n\pmod{p}$. Denote by $x$ (resp. $y$) the number of hyperplanes  of multiplicity $\equiv n-p\pmod{p^2}$
(resp $\equiv n\pmod{p^2}$) through $p$. Obviously $x+y=p+1$.

Denote by $H_i$, $i=0,\ldots,p$,  the hyperplanes through $T$ and set 
$\mathcal{F}(T)\equiv n+\alpha p\pmod{p^2}$. Now
\begin{eqnarray*}
	n &=& \sum_i\mathcal{F}(H_i) -p\mathcal{F}(T) \\
	&\equiv & x(n-p)+yn -p(n+\alpha p) \pmod{p^2} \\
	&\equiv & n(x+y)-px - np - \alpha p^2 \pmod{p^2}.
\end{eqnarray*}
This implies $px\equiv 0\pmod{p^2}$, whence $x\equiv 0\pmod{p}$ and $y\equiv 1\pmod{p}$
(i.e. $y=1$ or $p+1$).

Define an arc $\widetilde{\mathcal{F}}$ in the dual geometry by
\[\widetilde{\mathcal{F}}(H)=\left\{
\begin{array}{cll}
1 & \text{ if } & \mathcal{F}(H)\equiv n\pmod{p^2}, \\
0 & \text{ if } & \mathcal{F}(H)\equiv n-p\pmod{p^2}.
\end{array}
\right.\]
By the fact proved above, if a line contains two 1-points with respect to
$\mathcal{F}$ then the whole line incident with them consists of 1-points. This means that all hyperplanes of multiplicity $n\pmod{p^2}$ form  a subspace in the dual geometry.

Consider a minimal hyperplane $H_0$, i.e. a hyperplane of multiplicity $w$. All hyperlines through the point of reduction are contained in a unique hyperplane of multiplicity $n\pmod{p^2}$. 	This implies that the number of the hyperplanes of multiplicty
$n\pmod{p^2}$ is equal to the number of the hyperlines in $H_0$ through a fixed point. This number is $v_{r-1}$. This implies that the hyperplanes of multiplicity $n\pmod{p^2}$ are all hyperplanes through a fixed line $L$.

It remains to show that all points on $L$ have multplicity at least 1 with respect to $\mathcal{F}$. Fix a minimal hyperplane $H_0$ and a hyperline $T$ in $H_0$ of multiplicity
$n-1\pmod{p}$. As noted above, all hyperplanes $H_i$, $i=0,\ldots,p$, through $T$ are also of multiplicity $n-p\pmod{p^2}$. Denote by $P_i$ the unique point of reducibility of the
minihyper $\mathcal{F}\vert_{H_i}$. All points $P_i$ are outside of the hyperlne $T$.
In addition, they are collinear since they form a blocking set with respect to the hyperplanes. Denote the line containing the points $P_i$ by $L'$.  
Assume there is a hyperplane $H$ of multiplicity $n\pmod{p^2}$ that meets $L'$ in a single point, since then it meets $H_1,\ldots,H_p$ in hyperlines of multiplicity
$n-1\pmod{p}$, which is impossible. Hence  every hyperplane of multiplicity $n\pmod{p^2}$ contains $L'$ and hence $L\equiv L'$.
\end{proof}

\section{The Classification of $(70,22)$-Minihypers in $\textrm{PG}(4,3)$}

As an application of Theorem~\ref{thm:main} 
we shall characterize the $(70,22)$-minihypers in $\PG(4,3)$.
This characterization is crucial for attacking the nonexistence of some ternary 6-dimensional codes whose existence is in doubt (cf \cite{Maruta-table}).

First, we claim without proof several characterization results for minihypers in $\PG(3,3)$. The proofs can be found  in \cite{LRR22-a}.

\begin{lemma}
	\label{lma:21-6}
	A $(21,6)$-minihyper in $\PG(3,3)$ is one of the following:		
	
	$(\alpha)$ the sum of a plane and two lines;
	
	$(\beta)$ a minihyper with one double point with $a_{12}=2$;
	
	$(\gamma)$ a projective minihyper and $a_{12}=1$.
\end{lemma}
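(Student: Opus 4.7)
The plan is to use Ward's divisibility, compute the spectrum together with a second moment, and then case-split on the maximum plane multiplicity $M := \max_H \mathcal{F}(H)$.

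First, since $n-w = 15 \equiv 0 \pmod 3$, Theorem~\ref{thm:ward} with $p=3, e=1$ gives that every plane multiplicity is $\equiv 21 \equiv 0 \pmod 3$, hence in $\{6,9,12,15,18,21\}$. Writing $a_i$ for the number of $i$-planes, the double-counts $\sum a_i = 40$ and $\sum i\,a_i = v_3 \cdot n = 273$ combine to
\[
a_9 + 2a_{12} + 3a_{15} + 4a_{18} + 5a_{21} = 11,
\]
while a count of (point, point, plane)-triples yields $\sum_H \mathcal{F}(H)^2 = 9\,S + 4n^2$ with $S := \sum_P \mathcal{F}(P)^2$. Since $S \ge 21$ with equality iff $\mathcal{F}$ is projective, this already forces $M \ge 12$.

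If $M \ge 15$, fix a plane $\pi$ with $\mathcal{F}(\pi) = M$. Inequality~\eqref{eq:bound1} gives every line in $\pi$ a $\mathcal{F}$-multiplicity at least $6-(21-M)/3 \ge 4$, and summing over the four lines of $\pi$ through a point $P\in\pi$ via $\sum_{\ell\ni P,\,\ell\subset\pi}\mathcal{F}(\ell) = 3\mathcal{F}(P) + \mathcal{F}(\pi)$ forces $\mathcal{F}(P) \ge 1$. Hence $\mathcal{F}' := \mathcal{F} - \chi_\pi$ is a non-negative $(8,2)$-minihyper in $\PG(3,3)$, and the same Ward+moment recipe applied to $\mathcal{F}'$ shows it is either $2\chi_\ell$ or $\chi_{\ell_1}+\chi_{\ell_2}$ for two distinct lines. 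Either way, $\mathcal{F}$ is the sum of a plane and two lines, which is case~$(\alpha)$.

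If $M = 12$, plane multiplicities lie in $\{6,9,12\}$, the spectrum relation collapses to $a_9 + 2a_{12} = 11$, and the second moment gives $S = 19 + 2a_{12}$, equivalently $\sum_P \mathcal{F}(P)(\mathcal{F}(P)-1) = 2(a_{12}-1)$. Hence $a_{12}=1$ forces every $\mathcal{F}(P)\in\{0,1\}$, which is~$(\gamma)$, and $a_{12}=2$ forces exactly one double point with the rest in $\{0,1\}$, which is~$(\beta)$. The main obstacle is excluding $a_{12}\in\{3,4,5\}$, since the arithmetic alone is consistent; my plan is inclusion--exclusion on three $12$-planes $H_1,H_2,H_3$,
\[
\mathcal{F}(H_1\cup H_2\cup H_3) = 36 - \sum_{i<j}\mathcal{F}(H_i\cap H_j) + \mathcal{F}(H_1\cap H_2\cap H_3) \le 21,
\]
combined with the hyperline identity $\sum_{H\supset T}\mathcal{F}(H) = 3\mathcal{F}(T) + n$ and the restrictions on point multiplicities forced by $S = 19 + 2a_{12}$: whether the three $12$-planes share a common line or meet only in a single point, the required values of the $\mathcal{F}(H_i\cap H_j)$ exceed what the few available high-multiplicity points can supply.
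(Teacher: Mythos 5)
The paper itself gives no proof of Lemma~\ref{lma:21-6}; it is quoted from \cite{LRR22-a}, so there is nothing in the text to compare your argument against. Judged on its own, your framework is sound and the easy parts are correct: Ward's theorem does apply (the associated $[40s-21,4,27s-15]_3$ code is Griesmer with $3\,\|\,d$), the identities $a_9+2a_{12}+3a_{15}+4a_{18}+5a_{21}=11$ and $S=19+2a_{12}$ (when $M=12$) are right, and the deductions $a_{12}=1\Rightarrow(\gamma)$, $a_{12}=2\Rightarrow(\beta)$, as well as the reduction $\mathcal{F}=\chi_\pi+\mathcal{F}'$ with $\mathcal{F}'$ an $(8,2)$-minihyper when $M\ge15$, all check out.

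There are, however, two genuine gaps. First, the claim that an $(8,2)$-minihyper in $\PG(3,3)$ is a sum of two lines does not follow from ``the same Ward+moment recipe'': for $\mathcal{F}'$ the moments only give $a_8=(S-8)/2$, $a_5=16-S$, and in the projective case ($S=8$) you still must rule out $8$-point double blocking sets that are not unions of two lines (e.g.\ the projection from a point of the set is a $7$-point blocking multiset of $\PG(2,3)$, which need not contain a line). This is a known but nontrivial characterization and must be proved or cited. Second, and more seriously, the exclusion of $a_{12}\in\{3,4,5\}$ is only a plan, and the proposed mechanism does not close all cases. Concretely, take $a_{12}=4$ with $\sum_P\mathcal{F}(P)(\mathcal{F}(P)-1)=6$ realized by a single $3$-point $P$: every line $H_i\cap H_j$ has multiplicity $\ge5$, hence contains $P$, hence has multiplicity at most $6$; no three of the $12$-planes can share a line (that would need $\mathcal{F}(\ell)\ge7$, impossible with only $P$ available), and for three planes in general position inclusion--exclusion gives $36-3\cdot6+3=21$ --- exactly the cardinality, with no contradiction. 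Excluding this and the analogous $a_{12}=5$ configurations requires a further layer of argument (e.g.\ exploiting that equality forces $\mathcal{F}$ to be supported on $H_1\cup H_2\cup H_3$ and then finding a plane or a line through $\ell_{ij}$ whose multiplicity is forced below the minihyper bound), which you have not supplied. As written, the proposal is an incomplete proof.
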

\medskip

\begin{lemma}
	\label{lma:22-6}
	Every $(22,6)$-minihyper in $\PG(3,3)$ with maximal point multiplicity 2
	is either reducible to one of the
	$(21,6)$-minhypers, or else
	the sum of a projective plane of order 3 and a non-canonical
	planar $(9,2)$-minihyper (the complement of an oval in $\PG(2,3)$). 
\end{lemma}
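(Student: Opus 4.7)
Let $\mathcal{F}$ be a $(22,6)$-minihyper in $\PG(3,3)$ with maximal point multiplicity~$2$, and suppose $\mathcal{F}$ is not reducible to a $(21,6)$-minihyper. The plan is to produce a plane $\pi_1$ all of whose $13$ points have positive $\mathcal{F}$-multiplicity and to identify $\mathcal{F}-\chi_{\pi_1}$ as a projective $(9,2)$-minihyper whose support is the complement of an oval in a second plane $\pi_2$.

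First I would collect numerical data. Lemma~2.1 gives $\mathcal{F}(L)\ge 1$ for every line~$L$ and $\mathcal{F}(H)\ge 6$ for every plane~$H$; double counting yields $\sum_H\mathcal{F}(H)=286$ together with
\[
\sum_H \mathcal{F}(H)^2 \,=\, 9\sum_P\mathcal{F}(P)^2+4\cdot 22^2 \,=\, 9(22+2\Lambda_2)+1936,
\]
where $\Lambda_2$ counts the $2$-points of $\mathcal{F}$ (at least one exists by assumption). Since $\gcd(n-w,q)=\gcd(16,3)=1$, the contrapositive of Theorem~\ref{thm:hill} forces some plane $H$ to satisfy $\mathcal{F}(H)\not\equiv n\equiv 1\pmod 3$ and $\mathcal{F}(H)\not\equiv w\equiv 0\pmod 3$, hence $\mathcal{F}(H)\equiv 2\pmod 3$ and $\mathcal{F}(H)\in\{8,11,14,17,20\}$. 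A minimum plane $H_0$ of weight~$6$ exists by definition, and its restriction is a $(6,w')$-minihyper in $\PG(2,3)$ with $w'=1$, since the average line weight inside $H_0$ equals $24/13<2$.

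The core of the argument is to patch this local information into a global decomposition. I would enumerate the $(6,1)$-minihypers in $\PG(2,3)$ of maximum multiplicity at most~$2$---a short list, each member containing a line---and combine it with the non-reducibility assumption, which forces every point of positive $\mathcal{F}$-multiplicity to lie on at least one minimum plane. From this one should deduce in turn that (i) all $2$-points of $\mathcal{F}$ are collinear along some line~$\ell$; (ii) a full projective plane $\pi_1$ through~$\ell$ is contained in the support of $\mathcal{F}$; and (iii) the remaining weight is supported on a single second plane $\pi_2$ through~$\ell$. The natural tool here is the projection $\varphi_{Q,\pi}$ from~\eqref{eq:project}, applied at a $2$-point $Q$ onto a plane $\pi$ not containing $Q$, which converts $\mathcal{F}$ into a $(20,4)$-minihyper in $\PG(2,3)$ whose structure is tightly constrained by Lemma~2.2 and by small-case planar classifications; this patching step is the main obstacle.

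Once $\pi_1$ is identified, the multiset $\mathcal{F}':=\mathcal{F}-\chi_{\pi_1}$ has weight~$9$ and satisfies $\mathcal{F}'(H)\ge 2$ for every plane~$H$, because $\chi_{\pi_1}(H)\le 4$ for $H\ne\pi_1$ and $\mathcal{F}(H)\ge 6$. Hence $\mathcal{F}'$ is a $(9,2)$-minihyper in $\PG(3,3)$, and being supported on~$\pi_2$ it is in fact planar. Of the two admissible types of $(9,2)$-minihyper in $\PG(2,3)$, the canonical one (two lines through a common point together with an extra point) carries a $2$-point which, added to $\chi_{\pi_1}$, either produces a point of $\mathcal{F}$-multiplicity~$3$ or admits a single-point reduction of $\mathcal{F}$, contradicting our hypotheses. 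Only the non-canonical $(9,2)$-minihyper---the complement of an oval---survives, yielding the asserted decomposition.
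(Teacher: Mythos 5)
The paper does not actually prove Lemma~\ref{lma:22-6}: it is stated without proof and attributed to \cite{LRR22-a}, so there is no in-paper argument to compare yours against. Judged on its own terms, your proposal gets the bookkeeping right (the identities $\sum_H\mathcal{F}(H)=286$ and $\sum_H\mathcal{F}(H)^2=9(22+2\Lambda_2)+4\cdot 22^2$ are correct, as is the use of the contrapositive of Theorem~\ref{thm:hill} to produce a plane of multiplicity $\equiv 2\pmod 3$, after checking that the associated $[58,4,38]_3$ code is Griesmer), and the endgame is sound: once $\mathcal{F}=\chi_{\pi_1}+\mathcal{F}'$ with $\mathcal{F}'$ a planar $(9,2)$-minihyper, the canonical alternative (two lines plus a point) is killed either by a point of multiplicity $3$ or by the observation that $\chi_{\pi_1}+\chi_{L_1}+\chi_{L_2}$ is already a $(21,6)$-minihyper of type $(\alpha)$ in Lemma~\ref{lma:21-6}, so $\mathcal{F}$ would be reducible.

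The genuine gap is that steps (i)--(iii) --- collinearity of the $2$-points, the existence of a full plane $\pi_1$ inside the support, and the planarity of the residual --- are the entire substance of the lemma, and you do not prove them; you explicitly defer them as ``the main obstacle.'' Everything you do carry out is either routine counting or the short final elimination, so the proposal is a plan rather than a proof. Moreover, the one concrete claim you make about how the patching would go is false: it is not true that every $(6,1)$-blocking multiset of $\PG(2,3)$ contains a line in its support. The projective triangle is a $6$-point blocking set of $\PG(2,3)$ meeting every line but containing no full line, so your proposed enumeration of minimal planes does not reduce to ``line plus two extra points'' configurations, and the intended propagation from minimal planes to the plane $\pi_1$ does not start as described. (A smaller issue: your verification that $\mathcal{F}'(H)\ge 2$ covers only $H\ne\pi_1$; the case $H=\pi_1$ needs $\mathcal{F}(\pi_1)\ge 15$, which must be extracted from the structure of $\mathcal{F}'$ on the line $\pi_1\cap\pi_2$.) To complete the argument you would need to actually carry out the projection analysis from a $2$-point, or an equivalent case analysis of the planes of multiplicity $\equiv 2\pmod 3$, and show that they force the decomposition claimed in (i)--(iii).
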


The next lemma follows from the classification of the linear codes with parameters
$[50,4,33]_3$ and $[49,4,32]_3$ given in \cite{L98}.

\begin{lemma}
	\label{lma:30-9}
	A $(30,9)$-minihyper in $\PG(3,3)$ 	is one of the following:
	\begin{enumerate}[(a)]
		\item the sum of two planes and a line (a canonical minhyper);
		\item the union of two planes plus two skew lines meeting the two planes in their common line;
		\item the complement of a 10-cap in $\PG(3,3)$ (a projective minihyper).
	\end{enumerate}
	Every $(31,9)$-minihyper in $\PG(3,3)$ is reducible
	to a $(30,9)$-minihyper.
\end{lemma}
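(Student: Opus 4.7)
The strategy is to translate the classification of $(30,9)$- and $(31,9)$-minihypers in $\PG(3,3)$ into the classification of the associated Griesmer arcs via the standard minihyper--arc duality $\mathcal{F}\mapsto s\cdot\mathbf{1}-\mathcal{F}$, and then to invoke the code classifications of~\cite{L98}.

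To set up the correspondence, let $\mathcal{F}$ be an $(n,9)$-minihyper in $\PG(3,3)$ with $n\in\{30,31\}$, and let $s$ denote its maximal point multiplicity. Then $\mathcal{K}:=s\cdot\mathbf{1}-\mathcal{F}$ is an arc with parameters $(40s-n,\,13s-9)$, equivalently a linear $[N,4,N-w]_3$ code with $N=40s-n$ and $N-w=27s-n+9$. A routine check against $g_3(4,d)=d+\lceil d/3\rceil+\lceil d/9\rceil+\lceil d/27\rceil$ shows that this code attains the Griesmer bound for every $s\ge 1$. The key instances are: $(n,s)=(30,1)$, yielding a $[10,4,6]_3$ code (the complement of a $10$-cap in $\PG(3,3)$); $(n,s)=(30,2)$, yielding a $[50,4,33]_3$ code; and $(n,s)=(31,2)$, yielding a $[49,4,32]_3$ code.

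Next I would restrict the possible values of $s$. The case $s=1$ corresponds to the projectively unique $10$-cap in $\PG(3,3)$, the elliptic quadric, giving case~$(c)$. For $s\ge 3$ I would fix a point $P$ with $\mathcal{F}(P)=s$ and project $\mathcal{F}$ from $P$ onto a plane $\pi\not\ni P$ via the map in~\eqref{eq:project}; the induced planar multiset has total mass $n-s$ and every line of multiplicity at least $9-s$. Combining this with the rigidity imposed by a minimum plane $H_0$ (with $\mathcal{F}(H_0)=9$), I expect to show that either $s\ge 4$ forces a contradiction, or $s=3$ pins $\mathcal{F}$ down as the multiset sum of two planes through $P$ and a line through $P$, placing $\mathcal{F}$ inside case~$(a)$ with a triple point. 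Once only $s=2$ remains, the classifications of $[50,4,33]_3$ and $[49,4,32]_3$ codes in~\cite{L98} can be read off in the minihyper picture: the former provides the descriptions in $(a)$ and $(b)$ for $n=30$, and the latter shows that every $[49,4,32]_3$ code is a single-coordinate puncturing of a $[50,4,33]_3$ code, which is exactly the reducibility claim for $n=31$ (the unique reduction point being the removed column).

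The main obstacle I anticipate is bounding $s$. Since $\gcd(n-w,q)=3$ in both Griesmer instances, Theorem~\ref{thm:hill} is not available, so the projection argument for $s\ge 3$ has to be pushed through geometrically, using~\eqref{eq:bound} and~\eqref{eq:bound1} to constrain line and plane multiplicities and to rule out configurations with a quadruple point. Once that step is in place, everything else is a direct unpacking of the $\PG(3,3)$ realisations of the codes classified in~\cite{L98}.
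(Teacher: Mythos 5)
Your approach is essentially the paper's own: the paper offers no proof of this lemma beyond the remark that it follows, via the duality $\mathcal{F}\mapsto s\cdot\mathbf{1}-\mathcal{F}$, from the classification of the $[50,4,33]_3$ and $[49,4,32]_3$ Griesmer codes in \cite{L98}, which is exactly your central step. Your extra care about the maximal point multiplicity $s$ is well placed (case (a) admits triple points, so the $s=2$ correspondence alone does not exhaust the statement), but rather than a separate projection argument for $s\ge 3$ you could observe that $s\cdot\mathbf{1}-\mathcal{F}$ meets the Griesmer bound for every $s$ (e.g.\ $s=3$ yields a $[90,4,60]_3$ code), so those cases reduce to the same kind of code classification.
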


Now we state the theorem which is the main result in this section.
It describes the structure of the
$(70,22)$-minihypers in $\PG(4,3)$.

\begin{theorem}
	\label{thm:70-22}
	Let $\mathcal{F}$ be a $(70,22)$-minihyper in $\PG(4,3)$.
	Then $\mathcal{F}$ is one of the following:
	\begin{enumerate}[(A)]
		\item the sum of a solid and  a $(30,9)$-minihyper in $\PG(4,3)$;
		\item the sum of a $(66,21)$-minihyper in $\PG(4,3)$ and a line.
	\end{enumerate}	
\end{theorem}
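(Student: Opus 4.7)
The plan is to apply Theorem~\ref{thm:main} with $r=4$, $p=3$. The congruence hypothesis is immediate: $22-67=-45\equiv 0\pmod 9$, so $w\equiv n-p\pmod{p^2}$. The three numbered conditions of Theorem~\ref{thm:main} then become: every hyperplane $H\subset\PG(4,3)$ satisfies $\mathcal{F}(H)\equiv 4$ or $7\pmod 9$; for those $\equiv 4$, the restriction $\mathcal{F}|_H$ is reducible to a $3$-divisible minihyper; for those $\equiv 7$, it is already $3$-divisible. The bulk of the proof is to verify these conditions (with a single exceptional configuration).

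First I would collect local constraints. Lemma~2.1 gives $\mathcal{F}(\pi)\ge 6$ for every plane $\pi$, so each restriction $\mathcal{F}|_H$ is at least a $(\mathcal{F}(H),6)$-minihyper in $\PG(3,3)$, and the double-count $\sum_H\mathcal{F}(H)=70\,v_4=2800$ over $v_5=121$ hyperplanes bounds the distribution of the multiplicities. For the minimum case $\mathcal{F}(H)=22$, Lemma~\ref{lma:22-6} leaves exactly two options: (i) $\mathcal{F}|_H$ is reducible to a $(21,6)$-minihyper, which is then $3$-divisible because $21\equiv 0\pmod 3$ and Theorem~\ref{thm:ward} applied to $(21,6)$ forces every plane multiplicity to be $\equiv 0\pmod 3$; or (ii) $\mathcal{F}|_H$ is the non-canonical ``plane plus complement-of-an-oval'' minihyper, whose $(9,2)$-part has line multiplicities $2,3,4$ and so is neither $3$-divisible nor reducible to a $3$-divisible minihyper. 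Analogous classifications from~\cite{LRR22-a} treat $\mathcal{F}(H)\in\{25,31,34,\ldots\}$, and Theorem~\ref{thm:ward} together with Lemma~2.2 (applied to hyperlines of $\PG(4,3)$) rules out all residues of $\mathcal{F}(H)$ modulo $9$ other than $4$ and $7$.

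If no hyperplane of $\mathcal{F}$ restricts to the non-canonical structure~(ii), conditions~(1)--(3) of Theorem~\ref{thm:main} hold in full, and the theorem yields $\mathcal{F}=\mathcal{F}'+\chi_L$ with $\mathcal{F}'$ a $(70-v_2,22-v_1)=(66,21)$-minihyper and $L$ a uniquely determined line; this is case~(B). Otherwise, some $H$ restricts to the non-canonical $(22,6)$-structure, which forces a full plane $\pi_1$ to lie in the support of $\mathcal{F}$. In this branch I would argue, using the classifications of low-cardinality minihypers on the other three solids through $\pi_1$ and the minimum-multiplicity constraint on every hyperplane of $\PG(4,3)$, that $\pi_1$ extends to a full solid $S\subset\mathcal{F}$. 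The residual $\mathcal{F}-\chi_S$ then has cardinality $70-40=30$ and meets every hyperplane in at least $9$ points, yielding a $(30,9)$-minihyper, which is case~(A).

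The main obstacle is this second branch: promoting a plane $\pi_1$ contained in $\mathcal{F}$ to a full solid $S$ contained in $\mathcal{F}$. The argument must carefully exploit the rigidity of the $(22,6)$-classification in $\PG(3,3)$, the compatibility of $\mathcal{F}|_H$ (non-canonical) with the divisibility structure forced on the other solids through $\pi_1$, and Lemma~2.2 applied to the four hyperplanes through $\pi_1$, to ensure that every point of a suitable solid has $\mathcal{F}$-multiplicity at least $1$. A secondary difficulty is the case enumeration in Step~2: ruling out each residue class of $\mathcal{F}(H)$ modulo $9$ outside $\{4,7\}$ requires accessing classification results from \cite{LRR22-a} beyond the three lemmas quoted here.
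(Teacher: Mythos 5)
Your high-level plan coincides with the paper's: verify the three hypotheses of Theorem~\ref{thm:main} to obtain case~(B), and isolate an exceptional configuration that yields case~(A). But the way you organize the case split leaves the hardest parts either unproved or attached to the wrong trigger. The paper's exceptional case is not ``some $22$-solid is the irreducible plane-plus-complement-of-an-oval''; it is ``some solid has multiplicity at least $49$'' (Lemma~\ref{lma:49solid}), and that case is dispatched in two lines: such a solid has no $0$-points (else $|\mathcal{F}|\ge 49+27=76$), so $\mathcal{F}-\chi_S$ is a $(30,9)$-minihyper. This completely avoids the step you yourself flag as the main obstacle --- promoting a plane contained in $\mathcal{F}$ to a solid contained in $\mathcal{F}$ --- which you do not carry out. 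Moreover, your branch~(i) (``no hyperplane restricts to the non-canonical structure'') does not by itself exclude solids of multiplicity $28$, $37$, $46$, or $\ge 49$, so you are not entitled to conclude there that conditions (1)--(3) of Theorem~\ref{thm:main} ``hold in full.''

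The second gap is the verification of condition~(1). Ward's theorem gives only $\mathcal{F}(S)\equiv 1\pmod 3$ (Lemma~\ref{lma:divisible}); passing from residues mod~$3$ to residues $4$ or $7$ mod~$9$ requires excluding solids of multiplicity $28$, $37$, $46$, and this is where the bulk of the paper's work sits. Lemma~\ref{lma:28-46} does it with projections from $8$-lines and $5$-lines of an $8$-plane and an analysis of the possible line types in the projection plane, and Lemma~\ref{lma:22irreducible} kills the irreducible $(22,6)$-restrictions outright via the standard spectrum identities ($\sum a_i$, $\sum i a_i$, $\sum \binom{i}{2}a_i$) and a contribution count over the planes of a minimal solid. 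Your proposal replaces all of this with ``Theorem~\ref{thm:ward} together with Lemma~2.2 rules out all residues other than $4$ and $7$'' and an appeal to unspecified classifications from \cite{LRR22-a}; neither suffices, and the latter is not how the paper proceeds. Until the multiplicity set $\{22,25,31,34,40,43\}$ is actually established and each restriction is shown divisible or reducible to divisible (Lemmas~\ref{lma:43}--\ref{lma:40}), the application of Theorem~\ref{thm:main} is not justified.
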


\begin{remark}
The characterization of the $(66,21)$-minihypers
in $\PG(4,3)$ is given in \cite{LRR22}.	
\end{remark}

The proof of this theorem is split in several lemmas.
Until the end of the section, we shall assume that
$\mathcal{F}$ is a  $(70,22)$-minihyper in
$\PG(4,3)$. 

\begin{lemma}
\label{lma:divisible}
Let $\mathcal{F}$ be a $(70,22)$-minihyper in $\PG(4,3)$.
Then for
every solid $S$ in $\PG(4,3)$ it holds
$\mathcal {F}(S)\equiv 1\pmod{3}$ (i.e. $\mathcal{F}$ is a divisible minihyper).	
\end{lemma}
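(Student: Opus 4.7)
The plan is to apply Theorem~\ref{thm:ward} (the Hamada--Ward-type divisibility theorem) directly, since the parameters of $\mathcal{F}$ are tailor-made for it. First I would check the arithmetic hypothesis: $n - w = 70 - 22 = 48$, which is divisible by $p = 3$, so Ward's condition $w \equiv n \pmod{p^e}$ is satisfied in $\PG(4,3)$ with $p = 3$ and $e = 1$.

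Once this is noted, Theorem~\ref{thm:ward} yields $\mathcal{F}(H) \equiv n \pmod{3}$ for every hyperplane $H$ of $\PG(4,3)$. Since in $\PG(4,3)$ the hyperplanes are precisely the three-dimensional subspaces, i.e.\ the solids, and since $n = 70 \equiv 1 \pmod{3}$, this gives exactly the required conclusion $\mathcal{F}(S) \equiv 1 \pmod{3}$ for every solid $S$.

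I do not foresee any real obstacle: the lemma is essentially a one-line corollary of Theorem~\ref{thm:ward}, serving as a convenient starting point for the finer analysis of the $(70,22)$-minihypers that follows. It is worth remarking, however, that the same trick cannot be pushed to modulus~$9$: one has $48 \equiv 3 \pmod{9}$ rather than $0$, so any mod-$9$ refinement of the solid spectrum in subsequent lemmas will have to come from a genuinely different argument (e.g.\ Theorem~\ref{thm:kanda} applied inside solids, or the hypothesis-structure of Theorem~\ref{thm:main} in order to reduce $\mathcal{F}$ by a line).
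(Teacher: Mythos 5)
Your arithmetic and your conclusion are right, but the way you invoke Theorem~\ref{thm:ward} skips the one thing that actually has to be checked. Theorem~\ref{thm:ward} is Ward's divisibility theorem for codes \emph{meeting the Griesmer bound} (see the title of \cite{HNW98}); the Griesmer hypothesis is suppressed in the statement as printed in the paper, but it is essential, and without it the claim is simply false. For instance, in $\PG(2,3)$ take two lines through a common point $Q$ and delete $Q$: this is a $(6,3)$-arc with $w\equiv n\pmod 3$, yet it admits $2$-lines, and $2\not\equiv 6\pmod 3$. So you cannot apply the theorem to an arbitrary $(70,22)$-minihyper merely because $3\mid 70-22$; you must first exhibit an associated Griesmer code, and a minihyper is associated with a code only via its complementary arc.

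That verification is precisely the content of the paper's proof. One lets $s$ be the maximal point multiplicity of $\mathcal{F}$ and passes to the arc $s-\mathcal{F}$, which is a $(121s-70,\,40s-22)$-arc, i.e.\ corresponds to a $[121s-70,5,81s-48]_3$-code; one then checks that
\[
\sum_{i=0}^{4}\left\lceil \frac{81s-48}{3^{i}}\right\rceil=(81s-48)+(27s-16)+(9s-5)+(3s-1)+s=121s-70
\]
for every $s\ge1$, so this code attains the Griesmer bound. Since $3\mid 81s-48$, Ward's theorem now gives $3$-divisibility of all weights, hence all solids satisfy $\mathcal{F}(S)\equiv 70\equiv 1\pmod 3$. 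Your proposal omits this Griesmer computation entirely, and it is the whole substance of the lemma; the remaining steps (including your correct closing remark that the argument cannot be pushed to modulus $9$ because $9\nmid 48$) are bookkeeping.
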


\begin{proof}
	Assume the maximal point multiplicity of such minihyper is $s$. Then
$s-\mathcal{F}$ is a $(121s-70,40s-22)$-arc in $\PG(4,3)$, which is associated with a $[121s-70,5,81s-48]_3$-code which is readily checked to be a Griesmer code. By Ward's Theorem this code is divisible and hence, in turn, $\mathcal{F}$ is also divisible. Thus for each solid $S$ in $\PG(4,3)$ $\mathcal{F}(S)\equiv1\pmod{3}$.
\end{proof}

\begin{lemma}
	\label{lma:49solid}
	Let $\mathcal{F}$ be a $(70,22)$-minihyper in $\PG(4,3)$. 
	If there exists a solid $S$ with $\mathcal{F}(S)\ge49$ then
	$\mathcal{F}$ is the sum of a  solid and a $(30,9)$ minihyper in $\PG(4,3)$.
\end{lemma}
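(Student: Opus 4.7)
The plan is to show $\mathcal{F}' := \mathcal{F} - \chi_{S_0}$ is a $(30,9)$-minihyper, for which two things need checking: (i) the multiplicity profile of a $(30,9)$-minihyper, and (ii) pointwise non-negativity, i.e.\ every point of $S_0$ has $\mathcal{F}$-multiplicity at least $1$. The bookkeeping for (i) is short: $\mathcal{F}'$ has total weight $70-40=30$; for any solid $S \ne S_0$ the intersection $S\cap S_0$ is a plane, so $\mathcal{F}'(S)=\mathcal{F}(S)-13\ge 9$, while $\mathcal{F}'(S_0)=\mathcal{F}(S_0)-40\ge 9$. Since $\mathcal{F}(S_0)\ge 49>22$, a minimal solid $S_*$ of $\mathcal{F}$ must differ from $S_0$ and yields $\mathcal{F}'(S_*)=9$, so $\mathcal{F}'$ is genuinely a $(30,9)$-minihyper as soon as (ii) is established.

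The real content is (ii), which I would prove by contradiction via a projection from a hypothetical $0$-point of $S_0$. Suppose $P_0\in S_0$ has $\mathcal{F}(P_0)=0$, and pick any solid $\sigma$ with $P_0\notin\sigma$. The projection $\varphi=\varphi_{P_0,\sigma}$ from (\ref{eq:project}) produces a multiset $\mathcal{F}^\varphi$ on $\sigma\cong\PG(3,3)$ with two key properties. First, for every $Q\in\sigma$ one has $\mathcal{F}^\varphi(Q)=\mathcal{F}(\langle P_0,Q\rangle)-\mathcal{F}(P_0)=\mathcal{F}(\langle P_0,Q\rangle)\ge 1$, because the minihyper bound (\ref{eq:bound}) applied to a line in $\PG(4,3)$ gives $\mathcal{F}(\ell)\ge\lceil(13\cdot 22-4\cdot 70)/9\rceil=1$. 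Second, summing yields $\mathcal{F}^\varphi(\sigma)=\mathcal{F}(\mathcal{P})-\mathcal{F}(P_0)=70$.

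The contradiction then appears on the plane $\pi_0:=\varphi(S_0)\subset\sigma$. On one hand, the projection identity applied to the solid $S_0\ni P_0$ gives $\mathcal{F}^\varphi(\pi_0)=\mathcal{F}(S_0)\ge 49$. On the other hand, since every point of $\sigma$ carries $\mathcal{F}^\varphi$-weight at least one and $|\sigma\setminus\pi_0|=40-13=27$,
\[
\mathcal{F}^\varphi(\pi_0)\;=\;70-\mathcal{F}^\varphi(\sigma\setminus\pi_0)\;\le\;70-27\;=\;43,
\]
which is impossible. Thus no such $P_0$ exists and every point of $S_0$ has $\mathcal{F}$-multiplicity at least $1$, finishing the proof.

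I do not expect a serious obstacle; the delicate ingredients are only the choice of $\sigma$ disjoint from $P_0$ and the invocation of (\ref{eq:bound}) for a line, which is where the numerics $49+27>70$ are consumed.
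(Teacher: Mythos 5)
Your proposal is correct and takes essentially the same route as the paper: the paper also rules out a $0$-point of $S$ by observing that the $27$ lines through it not contained in $S$ must each be blocked outside $S$, forcing $|\mathcal{F}|\ge 49+27=76>70$, and then subtracts $\chi_S$ exactly as you do. Your projection from the hypothetical $0$-point is just this line count in different notation, so the two arguments coincide.
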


\begin{proof}
Let us first note that a solid of multiplicity at least 49 does not have 0-points.
Otherwise, the cardinality of $\mathcal{F}$ is $|\mathcal{F}|\ge49+27=76$
since every line through the 0-point in that solid has to be blocked at least once.
Now obviously $\mathcal{F}-\chi_S$ is a $(30,9)$-minihyper since the multiplicity of each solid different from $S$ is reduced by 13.
\end{proof}

\begin{lemma}
\label{lma:28-46}
	Let $\mathcal{F}$ be a $(70,22)$-minihyper in $\PG(4,3)$ in which every solid $S$ is 
	of multiplicity $\mathcal{F}(S)<49$. Then there exist no solid $S$ with:
	\begin{enumerate}[(a)]
	\item 	$\mathcal{F}(S)=28$;
	\item 	$\mathcal{F}(S)=37$;
	\item 	$\mathcal{F}(S)=46$;	
	\end{enumerate}
\end{lemma}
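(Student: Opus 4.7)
My plan is to handle the three cases separately. In each I assume a solid $S_0$ with $\mathcal{F}(S_0)=c$ exists ($c=28,37,46$) and aim for a contradiction. The uniform first move is to invoke inequality~(\ref{eq:bound1}) with $H=S_0$: every plane $T\subset S_0$ satisfies $\mathcal{F}(T)\ge 22-(70-c)/3=:w_c$, so $\mathcal{F}|_{S_0}$ is a $(c,w_c)$-minihyper in $\PG(3,3)$ with $w_c=8,11,14$ respectively.

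The second move is a Griesmer check on the complementary arc. If $s'$ denotes the maximum point multiplicity of $\mathcal{F}|_{S_0}$, then $s'\cdot\chi_{S_0}-\mathcal{F}|_{S_0}$ is a $(40s'-c,\,13s'-w_c)$-arc in $\PG(3,3)$, corresponding to a $[40s'-c,\,4,\,27s'-c+w_c]_3$ code whose length I compare with $g_3(4,\cdot)$. A short computation shows that in case~(b) the length $40s'-37$ falls below the Griesmer bound for every $s'\ge1$ as soon as $w\ge 11$; hence no $(37,w)$-minihyper with $w\ge 11$ exists in $\PG(3,3)$, and case~(b) is finished immediately. In cases~(a) and~(c) the same check forces $w_c$ to equal its lower bound ($8$ and $14$ respectively) and shows that $\mathcal{F}|_{S_0}$ is itself a Griesmer minihyper.

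For cases~(a) and~(c) I would then extract the principal local constraint: if $T\subset S_0$ is a minimum-multiplicity plane, then the identity $\sum_{S\supset T}\mathcal{F}(S)=70+3w_c$ combined with $\mathcal{F}(S_0)=c$ forces the three remaining solids through $T$ each to have multiplicity exactly $22$. I would then combine this with the global identities $\sum_i a_i=121$ and $\sum_i i\,a_i=2800$, the divisibility $\mathcal{F}(S)\equiv1\pmod 3$ from Lemma~\ref{lma:divisible}, the structure of $22$-solids supplied by Lemma~\ref{lma:22-6}, and a case analysis of $(28,8)$- and $(46,14)$-minihypers in $\PG(3,3)$ organised by the maximum point multiplicity $s'$ of $\mathcal{F}|_{S_0}$ (in the spirit of Lemmas~\ref{lma:21-6}--\ref{lma:30-9} and the classifications of \cite{LRR22-a}) in order to count the minimum-multiplicity planes forced inside $S_0$ and show that the induced number of $22$-solids exceeds what the global spectrum can accommodate.

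The main obstacle is this last step for cases~(a) and~(c): the internal structure of $\mathcal{F}|_{S_0}$ depends on $s'$, so each admissible value of $s'$ has to be dispatched separately with its own numerical contradiction. Case~(b), by contrast, is clean -- the Griesmer inequality on the restriction alone suffices.
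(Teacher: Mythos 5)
Your setup is sound and coincides with the paper's: inequality~(\ref{eq:bound1}) gives the lower bounds $8$, $11$, $14$ for the planes inside a $28$-, $37$-, $46$-solid, and your Griesmer computation does dispose of case~(b) correctly (the paper instead notes that every line of an $11$-plane would need multiplicity at least $3$, which forces all $13$ points of the plane to be covered and hence more than $11$ points in it; both routes work). Your observation that the three remaining solids through a minimal plane of $S_0$ must each have multiplicity exactly $22$ is also correct and is exactly where the paper starts in (a) and (c). For case~(c) you are in fact one short step from the finish and do not need any global spectrum count: by Lemma~\ref{lma:22-6} a $(22,6)$-minihyper has no $14$-plane, so the three solids through a $14$-plane of $S_0$ other than $S_0$ cannot be $22$-solids; hence they have multiplicity at least $25$ and $46+3\cdot 25-3\cdot 14=79>70$.

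The genuine gap is case~(a). There the $22$-solids through an $8$-plane do exist, so no multiplicity count over the solids through a plane yields a contradiction, and the paper has to do real geometric work: first, every $(28,8)$-minihyper in $\PG(3,3)$ is reducible to a $(26,8)$-minihyper and hence is the sum of two planes and two points; second, a $(22,6)$-minihyper containing an $8$-plane must be irreducible, i.e.\ the sum of a plane and the complement of an oval, and the $8$-plane can sit inside it in exactly three ways; third, projecting from a suitable line of the $8$-plane (an $8$-line or a $5$-line depending on the type) produces three collinear low-multiplicity points in the projection plane and hence a solid of multiplicity at most $19$ (or at least $47$, excluded by Lemma~\ref{lma:49solid}). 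Your proposed replacement---counting the minimal planes of $\mathcal{F}\vert_{S_0}$ and comparing the induced number of $22$-solids with the global identities $\sum_i a_i=121$ and $\sum_i i\,a_i=2800$ plus divisibility---is not carried out, and it is doubtful it can succeed in that form: those two linear identities admit solutions with $a_{28}>0$, and the third (quadratic) identity requires control of the point multiplicities $\Lambda_i$, which you have not established. You flag this step yourself as the main obstacle; it is in fact the bulk of the proof, and as written the proposal does not close case~(a).
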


\begin{proof}
(a) Let us start by noting that every $(28,8)$-minihyper in $\PG(3,3)$ is 
reducible to a $(26,8)$-minihyper. Hence it is the sum of two planes and two points.
The 22-solids through an 8-plane are just the irreducible $(22,6)$-minihypers
(the sum of a plane and the complement to an oval). There are three possibilities for such minihypers presented at the pictures below:

\begin{center}
\begin{tikzpicture}[line width=1pt, scale=0.4]
	\draw[black, rounded corners] (0,0)--(6,0)--(7,3)--(1,3)--(0,0) [fill=black!15];
	\draw[black, rounded corners] (0.75,-1)--(6,0)--(0.5,0)--(0.75,-1) [fill=black!15];

	\draw[black]  (5.6,0.4) circle (0.2cm) [fill=black];
	\draw[black]  (5.83,1.1) circle (0.2cm) [fill=black];
	\draw[black]  (6.06,1.8) circle (0.2cm) [fill=black];
	\draw[black]  (6.3,2.5) circle (0.2cm) [fill=black];

	\draw[black]  (3,0.5) circle (0.25cm) [fill=white];
	\draw[black]  (2,1.5) circle (0.25cm) [fill=white];
	\draw[black]  (4,1.5) circle (0.25cm) [fill=white];
	\draw[black]  (3,2.5) circle (0.25cm) [fill=white];

	\draw[black, rounded corners] (10,0)--(16,0)--(17,3)--(11,3)--(10,0) [fill=black!15];
	\draw[black, rounded corners] (10.75,-1)--(16,0)--(10.5,0)--(10.75,-1) [fill=black!15];

\draw[black]  (15.6,0.4) circle (0.2cm) [fill=black];
\draw[black]  (15.83,1.1) circle (0.2cm) [fill=black];
\draw[black]  (16.06,1.8) circle (0.25cm) [fill=black!10];
\draw[black]  (16.3,2.5) circle (0.2cm) [fill=black];

\draw[black]  (12,1.5) circle (0.25cm) [fill=white];
\draw[black]  (13,0.5) circle (0.25cm) [fill=white];
\draw[black]  (13,2.5) circle (0.25cm) [fill=white];

	\draw[black, rounded corners] (20,0)--(26,0)--(27,3)--(21,3)--(20,0) [fill=black!20];
	\draw[black, rounded corners] (20.75,-1)--(26,0)--(20.5,0)--(20.75,-1) [fill=black!20];

\draw[black]  (25.6,0.4) circle (0.2cm) [fill=black];
\draw[black]  (25.83,1.1) circle (0.25cm) [fill=black!10];
\draw[black]  (26.06,1.8) circle (0.25cm) [fill=black!10];
\draw[black]  (26.3,2.5) circle (0.2cm) [fill=black];

\draw[black]  (23,0.8) circle (0.25cm) [fill=white];
\draw[black]  (23.5,2.2) circle (0.25cm) [fill=white];

\draw (3,-2) node{\small{$(i)$}};
\draw (13,-2) node{\small{$(ii)$}};
\draw (23,-2) node{\small{$(iii)$}};

\end{tikzpicture}
\end{center}

In the picture,
the black points are 2-points, the white points are 0-points and the gray planes are planes of 1-points.

In case (i) the projection of the 22-plane from the 8-line is a line of type $(9,5,0,0)$
The projection of a 28-plane from the 8-line is of type
\[(18,0+\varepsilon_1,0+\varepsilon_2,0), \text{ or }
(9+\varepsilon_1,9+\varepsilon_2,0+\varepsilon_3,0),\]
with $\sum_i\varepsilon_i=2$ or $3$.
Now consider a 22-solid $S_0$ of type (i) and denote by $S_i$, $i=1,2,3$, the other three solids through the 8-plane $\pi$ consisting of four 2-points. 
  Now in the projection plane there exist three collinear $0$ or $0+\varepsilon$ points.  
The line incident with them is either of type $(18,0,0,0)$ or of type $(9+\varepsilon',0+\varepsilon'',0,0)$
which forces a solid of  multiplicity at most 19, a contradiction.

In case (ii) the proof is similar. We consider a projection from a 5-line in an 8-plane. 
Now the image of a 22-solid has one of the types
\[(11,3,2,1),\; (10,3,3,1), \; \text{ or } (10,3,2,2).\]
The image of a 28-solid is
\[(12+\varepsilon_1,3+\varepsilon_2,3+\varepsilon_3,3),\]
with $\sum_i\varepsilon_i=2$.
Now if the points of multiplicity at least 10 are not collinear then there is a line
in the projection plane of multiplicity at most 14, which forces a solid of multiplicity at most 19, a contradiction.
Otherwise the projection plane has a line of multiplicity at least 42.
This gives a solid with at least 47 points. This case was completed in Lemma~\ref{lma:49solid}.

(b) An 11-plane is forced to have a 2-line whence there are no $(37,11)$-minihypers.

(c) By Lemma~\ref{lma:22-6} a $(22,6)$-minihyper does not have 14-planes.
Fix a 14-plane $\pi$ in $S$ and denote by $S_0=S, S_1, S_2, S_3$ the solids throgh $\pi$
Clearly $\mathcal{F}(S_0)=46$, and $\mathcal{F}(S_i)\ge25$ for $i=1,2,3$. Then
\[|\mathcal{F}|=\sum_i\mathcal{F}(S_i)-3\mathcal{F}(\pi)\ge 46+3\cdot25-3\cdot14=79,\]
a contradiction.
\end{proof}

We have proved so far that if
$\mathcal{F}$ is a $(70,22)$-minihyper in $\PG(4,3)$ with maximal hyperplane of
multiplicity at most 46 then the possible multiplicities lie in the set 
$\{22,25,31,34,40,43\}$.

\begin{lemma}
	\label{lma:43}
	Let $\mathcal{F}$ be a $(70,22)$-minihyper in $\PG(4,3)$ and let $S$ solid of multiplicity 43. Then $\mathcal{F}\vert_S$ is a divisible minihyper
	with parameters $(43,13)$.
\end{lemma}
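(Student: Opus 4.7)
The plan is to pin down the parameters of the restriction $\mathcal{F}\vert_S$ as a minihyper in $\PG(3,3)$ by sandwiching the minimum plane multiplicity between two easy bounds, and then deduce divisibility from Ward's theorem.

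First, I would apply inequality \eqref{eq:bound1} with $H=S$. Since $n=70$, $w=22$, and $\mathcal{F}(S)=43$, every plane $T \subset S$ satisfies
\[
\mathcal{F}(T) \geq 22 - \frac{70-43}{3} = 13,
\]
so every plane in $S$ has multiplicity at least $13$.

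Next, I would show that this bound is attained by a standard double count. Each point of $S$ lies in $v_3 = 13$ planes of $S$, so
\[
\sum_{T\subset S}\mathcal{F}(T) = v_3\cdot\mathcal{F}(S) = 13\cdot 43 = 559.
\]
As there are $v_4 = 40$ planes in $S$ and $14\cdot 40 = 560 > 559$, at least one plane must have multiplicity exactly $13$. Combined with the previous step, $\mathcal{F}\vert_S$ is a $(43,13)$-minihyper in $\PG(3,3)$.

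Finally, since $w = 13 \equiv 43 = n \pmod{3}$, Theorem~\ref{thm:ward} applied with $p=3$ and $e=1$ yields $\mathcal{F}(T) \equiv 1 \pmod{3}$ for every plane $T\subset S$, which is exactly divisibility with divisor $3$. There is no essential obstacle here; the only slightly delicate point is noticing that the averaging count is just tight enough (average $559/40 < 14$) to force a plane of multiplicity $13$, after which Ward's theorem takes care of divisibility in a single line.
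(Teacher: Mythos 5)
Your reduction of the problem to the parameters is correct: \eqref{eq:bound1} gives $\mathcal{F}(T)\ge 22-(70-43)/3=13$ for every plane $T\subset S$, and the count $\sum_{T\subset S}\mathcal{F}(T)=13\cdot 43=559<14\cdot 40$ forces a $13$-plane, so $\mathcal{F}\vert_S$ is indeed a $(43,13)$-minihyper. The gap is in the last step, which is the real content of the lemma. Ward's theorem is a divisibility result for codes \emph{meeting the Griesmer bound}; that hypothesis is suppressed in the wording of Theorem~\ref{thm:ward}, but it is essential, and the paper itself verifies it before invoking Ward in Lemma~\ref{lma:divisible}. Here it fails: a $(43,13)$-minihyper in $\PG(3,3)$ with maximal point multiplicity $s$ corresponds to a $[40s-43,\,4,\,27s-30]_3$ code, and $g_3(4,27s-30)=40s-44$, so the associated code always exceeds the Griesmer bound by one. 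Indeed, the statement your argument would establish --- that \emph{every} $(43,13)$-minihyper in $\PG(3,3)$ is divisible --- is false: $\chi_{\PG(3,3)}+\chi_{P_1}+\chi_{P_2}+\chi_{P_3}$ is a $(43,13)$-minihyper (some plane misses all three points $P_i$), and a plane containing exactly one $P_i$ has multiplicity $14\not\equiv 1\pmod 3$.

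So the divisibility must be extracted from the ambient $(70,22)$-minihyper, which your argument never uses after the first line. That is what the paper does: if $\pi$ were a $14$-plane of $S$, the four solids through $\pi$ would satisfy $\sum_{i=1}^{3}\mathcal{F}(S_i)=70-43+3\cdot 14=69$; since all solid multiplicities lie in $\{22,25,31,34,40,43\}$, at least two of the $S_i$ would be $22$-solids containing $\pi$, contradicting the fact (Lemma~\ref{lma:22-6}) that a $(22,6)$-minihyper has no $14$-plane. A similar count disposes of $15$-planes, and then of all planes of multiplicity $\equiv 0,-1\pmod 3$, which is exactly the divisibility claim. You need an argument of this kind, using the neighbouring solids, to close the proof.
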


\begin{proof}
It is clear that in $S$ there are no planes of multiplicity 14 since 
$(22,6)$-minihypers do not have 14-planes. Furthermore, 15-planes in $S$ are also impossible. This in turn implies that there are no planes of
multiplicity $\equiv -1,0\pmod{3}$.
\end{proof}

\begin{lemma}
\label{lma:22irreducible}
Let $\mathcal{F}$ be a $(70,22)$-minihyper and let $S$ be a 22-solid.
Then $\mathcal{F}\vert_{S}$ is a reducible $(22,6)$-minihyper.
\end{lemma}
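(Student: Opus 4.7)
The plan is to verify the hypotheses of Theorem~\ref{thm:hill} (Hill--Lizak) for the restriction $\mathcal{F}|_S$ and then invoke that theorem to conclude reducibility. Two facts need to be established: that $\mathcal{F}|_S$ is a $(22,6)$-minihyper in $\PG(3,3)$, and that every plane of $S$ has multiplicity $\equiv 0$ or $1\pmod 3$. The first is the shorter of the two. The bound (\ref{eq:bound1}) applied with $H=S$ gives $\mathcal{F}(\pi)\ge 22-(70-22)/3=6$ for every plane $\pi\subset S$, and a Griesmer check on the complementary $[40s-22,4,27s-22+u]_3$-code (violated for every $u\ge 7$ and every $s\ge 1$, already with gap $19>18$ at $s=1,u=7$) rules out $(22,u)$-minihypers with $u\ge 7$, so the minimum plane multiplicity in $S$ is exactly $6$.

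The congruence condition is the substantive step. Suppose $\pi\subset S$ is a plane with $m:=\mathcal{F}(\pi)\equiv 2\pmod 3$, i.e.\ $m\in\{8,11,14,17,20\}$. The three other solids $S_1,S_2,S_3$ through $\pi$ have multiplicities $c_i\in\{22,25,31,34,40,43\}$ summing to $48+3m$. A direct enumeration shows that no triple from $\{22,31,40\}$ sums to $48+3m$ for such $m$ (briefly: the three ``$\equiv 4\bmod 9$'' values $22,31,40$ give triple-sums $\equiv 3\bmod 9$, while $48+3m\equiv 0\bmod 9$), so at least one $c_i$ must lie in $\{25,34,43\}$, and each of the three possibilities leads to a contradiction. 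If $c_i=43$, Lemma~\ref{lma:43} forces plane multiplicities in $S_i$ to be $\equiv 1\pmod 3$, contradicting $m\equiv 2$. If $c_i=34$, Theorem~\ref{thm:ward} applied to the $(34,10)$-minihyper $\mathcal{F}|_{S_i}$ (using $34-10\equiv 0\pmod 3$) gives the same congruence. If $c_i=25$, then $\mathcal{F}|_{S_i}$ is a $(25,7)$-minihyper and, since $25-7=18\equiv 0\pmod 9$, Theorem~\ref{thm:ward} with $e=2$ forces $m\equiv 7\pmod 9$, which is impossible since none of $8,11,14,17,20$ is $\equiv 7\pmod 9$. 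Hence every plane of $S$ has multiplicity congruent to $n=22$ or $w=6$ modulo $3$. Since $(n-w,q)=(16,3)=1$ and the associated $[18,4,11]_3$ (or $[58,4,38]_3$) code is Griesmer, Theorem~\ref{thm:hill} then gives the desired reduction of $\mathcal{F}|_S$ to a $(21,6)$-minihyper.

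The main obstacle is the congruence step, which implicitly uses analogues of Lemma~\ref{lma:43} for $25$- and $34$-solids: that those restrictions really have the stated minimal plane multiplicity (verified by the same Griesmer check as for the $(22,u)$-case), so that Theorem~\ref{thm:ward} applies with the correct $(n,w)$. The sharper mod-$9$ congruence coming from the $25$-solid case (available because $25-7$ is divisible by $9$) is what rules out $m\equiv 2\pmod 3$ in those configurations where the triple $(c_1,c_2,c_3)$ happens to avoid both $34$ and $43$.
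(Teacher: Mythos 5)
Your overall strategy (verify the Hill--Lizak congruence condition plane-by-plane inside $S$ and invoke Theorem~\ref{thm:hill}) is quite different from the paper's, which instead assumes $\mathcal{F}\vert_{S}$ is one of the four irreducible $(22,6)$-minihypers, writes down the global spectrum identity \eqref{eq:macw-main}, and bounds its left-hand side plane-by-plane against the number of $2$-points forced by irreducibility. Your first part (minimum plane multiplicity exactly $6$ via \eqref{eq:bound1} and the Griesmer check killing $(22,u)$ with $u\ge 7$) is fine, as is the mod~$9$ observation forcing some $c_i\in\{25,34,43\}$, and the $c_i=43$ and $c_i=34$ branches are sound.

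The gap is the $c_i=25$ branch, and unfortunately that is the branch that carries all the weight. Theorem~\ref{thm:ward} is Ward's divisibility theorem for \emph{Griesmer} codes (the hypothesis is omitted in the paper's statement, but without it the claim is false --- e.g.\ a triangle in $\PG(2,3)$ is a $(3,0)$-minihyper with lines of multiplicity $1$). For a $(25,7)$-minihyper in $\PG(3,3)$ the complementary arc gives a $[40s-25,4,27s-18]_3$ code, and $g_3(4,27s-18)=40s-26$, so this code is one \emph{above} the Griesmer bound and Ward yields nothing; indeed $(25,7)$-minihypers with $10$-planes exist (two planes plus three points), so neither your mod~$9$ conclusion nor even the mod~$3$ divisibility is automatic. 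This case cannot be dodged: for an $8$-plane $\pi$ in a $22$-solid the other three solids through $\pi$ have multiplicities summing to $72$, and the only triple from $\{22,25,31,34,40,43\}$ with that sum is $\{22,25,25\}$ --- so ruling out $8$-planes (which every irreducible $(22,6)$-minihyper contains, since $b_8\ge 8$ in all four spectra) rests entirely on knowing that $25$-solids have no $8$-planes. The paper proves exactly that statement in Lemma~\ref{lma:25-34}, but its proof \emph{uses} Lemma~\ref{lma:22irreducible}; so patching your argument by citing that divisibility would be circular. You need an independent argument for the $\{22,25,25\}$ configuration (this is essentially what the paper's counting with \eqref{eq:macw-main} supplies).
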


\begin{proof}
Denote by $(a_i)$ the spectrum of $\mathcal{F}$. Using simple counting argument,
we get the standard identities:
\begin{eqnarray*}
a_{22} + a_{25} + a_{31} + a_{34} + a_{40} + a_{43} &=& 121 \\
22a_{22} + 25a_{25} + 31a_{31} + 34a_{34} + 40a_{40} + 43a_{43} &=& 2800 \\
231a_{22} + 300a_{25} + 465a_{31} + 561a_{34} + 780a_{40} + 903a_{43} &=& 
35\cdot69\cdot13 + \\
       & & 27\sum_{i=2}^4 {i\choose2}\Lambda_i,
\end{eqnarray*}
where $\Lambda_i$ is the number of $i$-points.
(Note that every point is on a minimal hyperplane and the maximal point multiplicity
on a minimal hyperplane is 4.)
This implies
\begin{equation}
\label{eq:macw-main}
a_{31}+2a_{34}+5a_{40}+7a_{43}=10+\Lambda_2+3\Lambda_6+6\Lambda_4.
\end{equation}

The spectra $(b_i)$ of the irreducible $(22,6)$-minihypers are the following:

\begin{enumerate}[$(a)$]
\item $b_6=18$, $b_7=12$, $b_8=9$, $b_{22}=1$;
\item $b_6=18$, $b_7=12$, $b_8=8$, $b_{13}=1$, $b_{17}=1$;
\item $b_6=18$, $b_7=11$, $b_8=9$, $b_{13}=1$, $b_{16}=1$;
\item $b_6=17$, $b_7=12$, $b_8=9$, $b_{13}=1$, $b_{15}=1$.
\end{enumerate}

We are going to rule out each of the possibilities $(a)$--$(d)$.
The argument is similar in all four cases.
We fix a 22-solid $S_0$ and for each plane $\delta$ in $S_0$ we we consider the maximal contribution of the other three solids $S_1, S_2, S_3$, to the left-hand side of
(\ref{eq:macw-main}). The table below gives the maximal contributions
for planes $\delta$ in $S_0$ of different multiplicity;

\begin{center}
\begin{tabular}{c|ccc|c}
$\mathcal{F}(\delta)$ & $\mathcal{F}(S_1)$ & $\mathcal{F}(S_2)$ & $\mathcal{F}(S_3)$ &
contribution \\ \hline
6 & 22 & 22 & 22 &  0 \\
7 & 22 & 22 & 25 &  0 \\
8 & 22 & 25 & 25 &  0 \\
13 & 22 & 25 & 40 &  5 \\
15 & 22 & 31 & 40 &  6 \\
16 & 22 & 34 & 40 &  7 \\
17 & 25 & 34 & 40 &  7 \\
22 & 31 & 40 & 43 &  13 \\ \hline
\end{tabular}	
\end{center}

(a) The left-hand side is bounded from above by 13, since 6-, 7-, and 8-planes
give contribution of 0.
So, we have $13\ge 10+\Lambda_2$. But in this case we have obviously
$\Lambda_2\ge9$ (since the irreducible 22-plane alone has nine 2-points), which gives a contradiction.

(b) The left-hand side is at most $1\cdot5+1\cdot7=12\ge10+\Lambda_2$. In this case
$\Lambda_2\ge4$, a contradiction.

(c) We have again $1\cdot5+1\cdot7\ge10+\Lambda_2$, and $\Lambda_2\ge3$, a contradiction.

(d) The left-hand side is at most $1\cdot5+1\cdot6=11$. Hence $11\ge10+\Lambda_2$, but
$\Lambda_2\ge2$ again a contradiction.
\end{proof}

\begin{lemma}
	\label{lma:25-34}
	Let $\mathcal{F}$ be a $(70,22)$-minihyper in $\PG(4,3)$, and let $S$ solid of multiplicity $25$, or $34$. Then $\mathcal{F}\vert_S$ is a $(25,7)$-divisible minihyper
	in $\PG(3,3)$, respectively a divisible $(34,10)$-minihyper in $\PG(3,3)$.
\end{lemma}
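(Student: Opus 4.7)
The plan is to prove that $\mathcal{F}\vert_{S_0}$ is a $(25,7)$-minihyper in the first case and a $(34,10)$-minihyper in the second, and then to invoke Ward's theorem (Theorem~\ref{thm:ward}) to extract the desired divisibility with divisor $3$.

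First, I would fix a plane $\pi\subseteq S_0$ and exploit the standard identity $\sum_{H\supseteq\pi}\mathcal{F}(H)=n+3\mathcal{F}(\pi)=70+3\mathcal{F}(\pi)$, obtained by summing the multiplicities of the four solids through $\pi$. Since each solid-multiplicity lies in $\{22,25,31,34,40,43\}$ (in particular, each is at least $22$), this yields the lower bound
\[
\mathcal{F}(\pi)\ \ge\ 22-\frac{70-\mathcal{F}(S_0)}{3},
\]
which evaluates to $7$ when $\mathcal{F}(S_0)=25$ and to $10$ when $\mathcal{F}(S_0)=34$. On the other hand, a double count of the incidences $(P,\pi)$ with $P\in\pi\subseteq S_0$ gives $\sum_{\pi\subseteq S_0}\mathcal{F}(\pi)=13\,\mathcal{F}(S_0)$, so the average plane-multiplicity in $S_0$ equals $325/40=8.125$ or $442/40=11.05$, respectively; hence the minimum plane-multiplicity $w_0$ in $S_0$ must satisfy $w_0\le 8$ in the $25$-solid case and $w_0\le 11$ in the $34$-solid case.

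The crux is to exclude $w_0=8$ (resp.\ $w_0=11$). A plane $\pi\subseteq S_0$ of that multiplicity forces the three remaining solids through $\pi$ to have multiplicities summing to $45+3\cdot 8=69$ (resp.\ $36+3\cdot 11=69$); since each summand lies in $\{22,25,31,34,40,43\}$, the only admissible multiset is $\{22,22,25\}$, so some $S_i$ is a $22$-solid containing $\pi$ as a plane of multiplicity $8$ (resp.\ $11$). By Lemma~\ref{lma:22irreducible}, $\mathcal{F}\vert_{S_i}$ is a reducible $(22,6)$-minihyper $\mathcal{G}+\chi_Q$ for some $(21,6)$-minihyper $\mathcal{G}$ and a point $Q$. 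Since $n-w=15$ for $\mathcal{G}$, Theorem~\ref{thm:ward} (applied with $p=3$, $e=1$) gives $\mathcal{G}(\delta)\equiv 0\pmod 3$ for every plane $\delta\subseteq S_i$, so the plane-multiplicities of $\mathcal{G}+\chi_Q$ lie in $\{6,7,9,10,12,13,15,16,18,19,21,22\}$. Neither $8$ nor $11$ appears in this list, contradicting the existence of $\pi$.

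Combining these steps forces $w_0=7$ in the $25$-solid case and $w_0=10$ in the $34$-solid case, so $\mathcal{F}\vert_{S_0}$ is a $(25,7)$- or $(34,10)$-minihyper in $\PG(3,3)$. Applying Ward's theorem once more: for $(25,7)$ one has $n-w=18\equiv 0\pmod 9$, forcing every plane in $S_0$ to have multiplicity $\equiv 7\pmod 9$; for $(34,10)$ one has $n-w=24\equiv 0\pmod 3$, forcing every plane to have multiplicity $\equiv 1\pmod 3$. In both cases, $\mathcal{F}\vert_{S_0}$ is divisible with divisor $3$, as required. The only non-routine step is the exclusion of the intermediate minima $w_0=8$ and $w_0=11$, and this is precisely where the structural information supplied by Lemma~\ref{lma:22irreducible} together with Ward's theorem inside $\PG(3,3)$ becomes essential.
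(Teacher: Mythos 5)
Your derivation of the parameters is sound and close to what the paper does implicitly: the bound $\mathcal{F}(\pi)\ge 22-(70-\mathcal{F}(S_0))/3$ gives the minima $7$ and $10$, and your exclusion of $8$-planes (resp.\ $11$-planes) via the forced multiset $\{22,22,25\}$ of the remaining solids, combined with Lemma~\ref{lma:22irreducible} and the $3$-divisibility of the $(21,6)$-part, is correct. The gap is in the final step, where you obtain the divisibility of $\mathcal{F}\vert_{S_0}$ by applying Theorem~\ref{thm:ward} directly to the $(25,7)$- and $(34,10)$-minihypers. Theorem~\ref{thm:ward} is Ward's divisibility theorem for \emph{Griesmer} codes (see the title of \cite{HNW98}; note that Lemma~\ref{lma:divisible} explicitly verifies the Griesmer condition before invoking it), and the congruence $w\equiv n\pmod{p^e}$ alone is not sufficient. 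For a $(25,7)$-minihyper in $\PG(3,3)$ the associated code has parameters $[40s-25,4,27s-18]_3$, which lies one unit \emph{above} the Griesmer bound $40s-26$, so the theorem does not apply. Worse, the statement you want is simply false for abstract $(25,7)$-minihypers: take $\mathcal{F}_0=\chi_{\pi_0}+\chi_{\pi_1}-\chi_P$ with $\pi_0\neq\pi_1$ planes of $\PG(3,3)$ and $P\in\pi_1\setminus\pi_0$. This is a $(25,7)$-minihyper, yet every plane other than $\pi_0,\pi_1$ meeting $\pi_1$ in a line avoiding $P$ has multiplicity $4+4=8\not\equiv 1\pmod 3$. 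So divisibility cannot follow from the parameters alone; it must be extracted from the ambient $(70,22)$-minihyper.

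That is precisely what the paper's proof supplies and what is missing from yours: after the $8$-planes are excluded, one must still rule out planes of multiplicity $9,11,12,14,15,\dots$ inside the $25$-solid. The paper does this residue by residue. For instance, every line of a $9$-plane $\pi\subset S_0$ has multiplicity at least $2$ (the four planes of $S_0$ through it sum to $25+3\mathcal{F}(\ell)\ge 9+3\cdot7$), while the average line multiplicity $36/13<3$ forces an exact $2$-line $\ell$; then $25+3\cdot 2=31=9+7+7+8$ produces an $8$-plane of $S_0$, which you have already excluded. Your single appeal to Ward with $e=2$ was meant to dispose of all these residues at once, and it cannot. (Incidentally, in the $(34,10)$ case the associated code $[40s-34,4,27s-24]_3$ does meet the Griesmer bound, so there a correctly stated Ward theorem would legitimately yield $3$-divisibility; for the $(25,7)$ case the geometric exclusion argument is unavoidable.)
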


\begin{proof}
Assume $\mathcal{F}\vert_S$ is a $(25,7)$-minihyper and assume that there exists an 8-plane in $S$. A minimal solid can have 8-planes only if it is irreducible. But such solids were ruled out by Lemma ~\ref{lma:22irreducible}. Hence counting the multiplicities of the solids through	$\pi$ we get $|\mathcal{F}|\ge 4\cdot25-3\cdot8=76$, a contradiction.
If we assume that there exist a 9-plane then a 2-line in this 9-plane is forced to be contained in an 8-plane, which was already ruled out.
In the sme wy we can rule out the existence of planes of multiplicity $-1,0\pmod{3}$.

Next assume that $\mathcal{F}\vert_S$ is a $(34,10)$ minihyper. It is immediate that there exist no 11-planes in $S$ since there exist no $(11,3)$-minihypers in $\PG(2,3)$.
From this point on the proof completed in the case of $25$-solids.
\end{proof}

\begin{lemma}
	\label{lma:40}
	Let $\mathcal{F}$ be a $(70,22)$-minihyper in $\PG(4,3)$, and let $S$ be a solid of multiplicity $40$. Then $\mathcal{F}\vert_S$ is a $(40,12)$-minihyper
	in $\PG(3,3)$, that is reducible to a $(39,12)$-minihyper.
\end{lemma}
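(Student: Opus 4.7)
My strategy is to verify the parameters $(40,12)$ first, then to establish reducibility to a $(39,12)$-minihyper.

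Applying~(\ref{eq:bound1}) with $H=S$ gives, for any plane $T\subset S$, the inequality $\mathcal{F}(T)\ge 22-(70-40)/3=12$. To show the bound is attained, I count
\[
\sum_{T\subset S}\mathcal{F}(T)=40\cdot v_3=520
\]
over the $v_4=40$ planes of $S$; the average plane multiplicity is $13$. If every plane had multiplicity at least~$13$, the sum would force every plane to have multiplicity exactly~$13$, and by invertibility of the plane--point incidence matrix of $\PG(3,3)$ over $\mathbb{Q}$ this would give $\mathcal{F}|_S=\chi_S$. I eliminate this ``all-ones'' case under the standing assumption that no solid has multiplicity $\ge 49$: it would force each plane of $S$ to be met outside $S$ by two $22$-solids and one $25$-solid, and the residual $\mathcal{F}-\chi_S$ in $\AG(4,3)$ would have to realize a rigid parallel-class partition pattern $(9,9,12)$, a configuration that I expect to contradict the reducible $(22,6)$-minihyper structure imposed by Lemma~\ref{lma:22irreducible}. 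Hence $\mathcal{F}|_S$ has a $12$-plane and is a $(40,12)$-minihyper.

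Next I show every plane of $\mathcal{F}|_S$ has multiplicity $\equiv 0$ or $1\pmod 3$. If some plane $\pi\subset S$ had $\mathcal{F}(\pi)=m$ with $m\equiv 2\pmod 3$, then the other three solids through $\pi$ would sum to $3m+30$; by Lemma~\ref{lma:28-46} every admissible decomposition into $\{22,25,31,34,40,43\}$ contains at least one $22$-, $25$-, or $34$-solid. But by Lemmas~\ref{lma:22irreducible} and~\ref{lma:25-34}, the $\mathcal{F}$-restriction on any such solid has plane multiplicities only in $\{0,1\}\pmod 3$---the $(22,6)$ case using $3$-divisibility of the underlying $(21,6)$-minihyper (Ward's Theorem~\ref{thm:ward}) together with an at-most-unit shift from the reduction point. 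This is incompatible with $\pi$ having multiplicity $m\equiv 2\pmod 3$ in that neighboring solid.

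Consequently, the plane multiplicities of $\mathcal{F}|_S$ are $\equiv n$ or $\equiv w\pmod 3$ for $(n,w)=(40,12)$; since $\gcd(n-w,3)=1$, Theorem~\ref{thm:hill} yields the (uniquely determined) reduction of $\mathcal{F}|_S$ to a $(39,12)$-minihyper. The principal obstacle will be the non-existence argument for the ``all-ones'' case, which requires a dedicated combinatorial verification of the $(9,9,12)$ parallel-class constraint against the known $(22,6)$- and $(25,7)$-structures.
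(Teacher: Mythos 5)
Your overall strategy---show every plane of $S$ has multiplicity $\equiv 0$ or $1\pmod 3$ and then invoke Theorem~\ref{thm:hill}---is the same as the paper's, but two of your steps do not go through as written. First, your claim that every decomposition of $3m+30$ (with $m\equiv 2\pmod 3$) into three multiplicities from $\{22,25,31,34,40,43\}$ contains a $22$-, $25$- or $34$-solid is false: $\{31,43,43\}$ gives $m=29$ and $\{40,43,43\}$ gives $m=32$, both $\equiv 2\pmod 3$. To kill those cases you would also need Lemma~\ref{lma:43} ($43$-solids are divisible) and Lemma~\ref{lma:30-9} ($31$-solids are reducible to divisible $(30,9)$-minihypers). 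The paper's route is cleaner and avoids the decomposition claim altogether: by the preceding lemmas \emph{every} solid of multiplicity $\ne 40$ restricts to a minihyper whose planes are $\equiv 0,1\pmod 3$, so a plane with $x\equiv 2\pmod 3$ lies only in $40$-solids; then $4\cdot 40-3x=70$ forces $x=30\equiv 0\pmod 3$, a contradiction.

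Second, and more seriously, your elimination of the ``all-ones'' case (every plane of $S$ a $13$-plane, so $\mathcal{F}\vert_S=\chi_S$) is not a proof: you only say you ``expect'' the $(9,9,12)$ pattern to contradict Lemma~\ref{lma:22irreducible} and explicitly defer the verification. You are right that this case must be excluded in order to produce a $12$-plane (the paper is silent on the point), but it can be closed in one line rather than by a ``dedicated combinatorial verification'': if all planes of $S$ are $13$-planes, then the three solids other than $S$ through each such plane have multiplicities summing to $69$, and the only admissible decomposition is $22+22+25$; hence the spectrum is $a_{22}=80$, $a_{25}=40$, $a_{40}=1$, the left-hand side of (\ref{eq:macw-main}) equals $5$, and the right-hand side is at least $10$---a contradiction. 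With these two repairs your argument becomes complete; your remaining steps (the lower bound $\mathcal{F}(T)\ge 12$ from (\ref{eq:bound1}), the averaging and incidence-matrix argument, and the derivation of the mod-$3$ plane multiplicities of a reducible $(22,6)$-minihyper from Theorem~\ref{thm:ward} applied to the underlying $(21,6)$-minihyper) are all sound.
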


\begin{proof}
We have to rule out the existence of planes of multiplicity $\equiv-1\pmod{3}$. By the previous results such a plane can be contained only in 40-solids. If we denote its multiplicity by
$x$ we have $x\equiv-1\pmod{3}$ and $4\cdot40-3x=70$, But the equation  implies $x\equiv0\pmod{3}$, a contradiction.
\end{proof}

Now it is easily checked that a $(70,22)$-minihyper in $\PG(4,3)$ with hyperplanes of multiplicity at most 43 satisfies the conditions of Theorem ~\ref{thm:70-22}:

- Condition (1) follows by Lemmas~\ref{lma:divisible}, and \ref{lma:28-46};

- Condition (2) follows by Lemmas~\ref{lma:22-6}, \ref{lma:30-9}, and \ref{lma:40};
	
- Condition (3) follows by Lemmas~\ref{lma:43}, \ref{lma:22irreducible}, and \ref{lma:25-34}.

\section*{Acknowledgments}  
The first author  was  supported    by  the  
Bulgarian National Research  Fund  under Contract
KP-06-N72/6-2023.
The research of the second author was supported by
Sofia  University  under  Contract  80-10-72/25.04.2023.
The second author  was  supported  by  the
Research  Fund  of  Sofia  University  under  Contract
80-10-164/18.04.2024.
The research of the third author was supported by the NSP P. Beron project CP-MACT.
\smallskip

\end{document}